\newcommand{\be}{\begin{equation}}
\newcommand{\ee}{\end{equation}}
\newtheorem{theorem}{Theorem}[section]
\newtheorem{lemma}[theorem]{Lemma}
\newtheorem{corollary}[theorem]{Corollary}
\theoremstyle{definition}
\theoremstyle{remark}
\newtheorem{remark}{Remark}[section]
\theoremstyle{remark}
\numberwithin{equation}{section}
\begin{document}

\title[Qualitative properties of dual fractional parabolic equations]{Qualitative properties of solutions for dual fractional nonlinear parabolic equations}

\author{Wenxiong Chen }
\address{Department of Mathematical Sciences, Yeshiva University, New York, NY,  10033 USA}
\email{wchen@yu.edu}

\author{Lingwei Ma}
\address{School of Mathematical Sciences, Tianjin Normal University,
Tianjin, 300387, P.R. China, and Department of Mathematical Sciences, Yeshiva University, New York, NY, 10033 USA}
\email{mlw1103@outlook.com}

\date{\today}

\begin{abstract}
In this paper, we consider the dual fractional parabolic problem
$$\left\{
\begin{array}{ll}
    \partial_t^\alpha u(x,t)+(-\Delta)^su(x,t)=f(u(x,t)) ,~  &\mbox{in}\,\,  \mathbb{R}^n_+\times\mathbb{R}  , \\
  u(x,t)= 0 , ~ &\mbox{in}\,\, (\mathbb{R}^n  \backslash \mathbb{R}^n_+) \times\mathbb{R},
\end{array}
\right.
$$
where $\mathbb{R}^n_+:=\{x\in\mathbb{R}^n\mid x_1>0\}$ is the right half space. We prove that the positive solutions are
strictly increasing in $x_1$ direction without assuming the solutions be bounded.

So far as we know, this is the first paper to explore the monotonicity of possibly unbounded solutions for the nonlocal parabolic problem involving both the fractional time derivative $\partial_t^\alpha$ and the fractional Laplacian $(-\Delta)^s$.
To overcome the difficulties caused by the dual nonlocality in space-time and by the remarkably
weak assumptions on solutions, we introduced several new ideas and our approaches are quite different from those in the previous
literature.
We first establish an unbounded narrow region principle without imposing any decay and boundedness assumptions on the antisymmetric functions at infinity by estimating the nonlocal operator $\partial_t^\alpha+(-\Delta)^s$ along a sequence of suitable auxiliary functions at their minimum points, which is an essential
ingredient to carry out the method
of moving planes at the starting point.
Then in order to remove the decay or bounded-ness assumption on the solutions, we develop a new novel approach lies in establishing the {\em averaging effects} for such nonlocal
operator and apply these {\em averaging effects} twice to guarantee that the plane can be moved all the way to infinity to derive the monotonicity of solutions.

We believe that the new ideas and techniques developed here will become very useful tools in
studying the qualitative properties of solutions, in particular of those unbounded solutions,
for a wide range of fractional elliptic and parabolic problems.

Mathematics Subject classification (2020): 35R11; 35B50; 35K58; 26A33.

Keywords: dual nonlocal parabolic equations; fractional time-diffusion; narrow region principle in unbounded domains; averaging effects; direct method of moving planes; monotonicity. \\
\end{abstract}

\maketitle

\section{Introduction}
\label{s:introduction}
The infancy of fractional calculus dates back to
a letter from L'H\^{o}pital in 1695, in which he asked
Leibniz how to define the derivative $\frac{\operatorname{d}\!^n f(x)}{\operatorname{d}\!x^n}$ when the order $n=\frac{1}{2}$ is not an integer.
Since then, this problem has received extensive interest from many mathematicians such as Riemann, Liouville, Riesz, Marchaud, Caputo and so on. They proposed the definitions of fractional derivatives in different forms. The accompanying surprise is that the fractional derivatives can be used to model many important physical phenomena, thereby considerable attentions have been paid to the study of qualitative properties of solutions to equations involving fractional derivatives.

In this paper, we investigate the nonlocal parabolic equations involving the fractional time derivative and the fractional Laplace operator as follows
\begin{equation}\label{model}
\left\{
\begin{array}{ll}
    \partial_t^\alpha u(x,t)+(-\Delta)^su(x,t)=f(u(x,t)) ,~  &\mbox{in}\,\,  \mathbb{R}^n_+\times\mathbb{R}  , \\
  u(x,t)= 0 , ~ &\mbox{in}\,\, (\mathbb{R}^n  \backslash \mathbb{R}^n_+) \times\mathbb{R},
\end{array}
\right.
\end{equation}
where $\mathbb{R}^n_+:=\{x\in\mathbb{R}^n\mid x_1>0\}$ is the right half space.
The space-time nonlocal equation in \eqref{model} can be seen as a typical model in the continuous time random walks \cite{MK}, which is a generalization of the Brownian random walks formulated as the equation involving the local time derivative. The latter describes the particles experience uncorrelated random displacements at fixed time intervals. The time non-locality explains the history dependence introduced in dynamics by the presence of anomalously large waiting time, and the space non-locality accounts for the existence of anomalously large jumps, such as L\'{e}vy flights connecting distant regions in space.

The fractional time derivative $\partial_t^\alpha$ we consider here is the Marchaud fractional derivative of order $\alpha\in(0,1)$, defined by
\begin{equation}\label{fratime}
\partial_t^\alpha u(x,t):=C_{\alpha}\int_{-\infty}^t \frac{u(x,t)-u(x,\tau)}{(t-\tau)^{1+\alpha}}\operatorname{d}\!\tau,
\end{equation}
which was first introduced by Marchaud in 1927. The normalization positive constant $C_\alpha=\frac{\alpha}{\Gamma(1-\alpha)}$, and $\Gamma$ denotes the Gamma function.
In order to guarantee the singular integral in \eqref{fratime} is well defined, we may assume that $u(x,\cdot)\in C^1(\mathbb{R})\times {\mathcal L}^-_{\alpha}(\mathbb{R})$, where ${\mathcal L}^-_{\alpha}(\mathbb{R})$ is a class of slowly increasing functions given by
$$ {\mathcal L}^-_{\alpha}(\mathbb{R}):=\{u(x,\cdot) \in L^1_{\rm loc} (\mathbb{R}) \mid \int_{-\infty}^t \frac{|u(x,\tau)|}{1+|\tau|^{1+\alpha}}\operatorname{d}\!\tau<+\infty\,\, \mbox{for any} \,\, t\in\mathbb{R}\}.$$
Such fractional time derivative emerges in a variety of physical phenomena, for instance,
particle systems with sticking and trapping phenomena, magneto-thermoelastic heat conduction, plasma turbulence and so on (cf. \cite{DCL1, DCL2, EE}).
The spatial nonlocal pseudo-differential operator in \eqref{model}, the fractional Laplacian $(-\Delta)^s$ is defined as
\begin{equation}\label{fraLap}
(-\Delta)^s u(x,t):=C_{n,s} P.V. \int_{\mathbb R^N} \frac{u(x,t)-u(y,t)}{|x-y|^{n+2s}}\operatorname{d}\!y,
\end{equation}
where $0<s<1$, $C_{n,s}$ is a normalization positive constant and $P.V.$ stands for the Cauchy principal value. We define
$$ {\mathcal L}_{2s}(\mathbb{R}^n):=\{u(\cdot, t) \in L^1_{\rm loc} (\mathbb{R}^n) \mid \int_{\mathbb R^n} \frac{|u(x,t)|}{1+|x|^{n+2s}}\operatorname{d}\!x<+\infty\},$$
then $u\in C^{1,1}_{\rm loc}(\mathbb{R}^n_+)\cap {\mathcal L}_{2s}(\mathbb{R}^n)$ ensures the integrability of \eqref{fraLap}. This fractional diffusion operator is of great interest due to its applications in physics.
To name a few, the fractional Laplacian arises in anomalous diffusion, quasi-geostrophic dynamics, phase transition models, and image reconstruction problems (cf. \cite{AB, BG, CV, GO}). Observe that the definition of the fractional time derivative $\partial_{t}^{\alpha}$ given in \eqref{fratime} looks similar to the one-dimensional fractional Laplacian except that such integral only takes account of the interactions in the past, which is related to the fact that the transport is not time reversible and has a memory effect.
It is worth mentioning that the nonlocal operator $\partial_{t}^{\alpha}+(-\Delta)^s$ can be reduced to the local heat operator $\partial_{t}-\Delta$ as $\alpha\rightarrow1$ and $s\rightarrow1$.

During the last decade, considerable attentions have been paid
to the investigation of the aforementioned space-time nonlocal equation in order to
acquire a clearer understanding of various physical phenomena, and then correspondingly many subtle mathematical problems have appeared. In the context of well-posedness such as the existence, uniqueness and regularity to such nonlocal parabolic equations have been systematically studied in a series of remarkable papers \cite{ A1, A2, ACV} by Caffarelli and his group.
However, so far as we know, there have not been any results on the geometry of solutions for such equations, hence our interest here is to propose a holistic approach to establish the monotonicity of solutions to problem \eqref{model} in a half space.

The method of moving planes introduced by Alexandroff in \cite{H} is a common technique to study the monotonicity of solutions to local elliptic and parabolic equations. However, this approach cannot be applied directly to psuedo-differential equations involving the fractional Laplace operator due to its nonlocality. To overcome this difficulty, an important progress traces back to the pioneering work of Caffarelli and Silvestre \cite{CS}, in which they developed an extension method to reduce the nonlocal problem into a local one in a higher dimensional space. Thereby the traditional method of moving planes designed for local equations can be applied for the extended problem to establish the properties of solutions.
Another useful approach is to turn
the given pseudo-differential equations into their equivalent integral equations, then one can use the method of moving planes in integral forms and the regularity lifting to investigate the properties of
solutions (cf. \cite{CLO1, CLO2, CLO}). These two effective methods
have been employed successfully
to investigate the
elliptic equations involving the fractional Laplacian, and a series of interesting results have been obtained in \cite{BCPS, CFY, CZ, DLL, LZ, MCL, ZCCY} and the references therein. However, the above two methods can only be applied to equations involving the fractional Laplacian,
and sometimes one needs to impose extra conditions on the problems which may not be necessary if
dealing with the fractional equations directly.
Nearly ten years later, a further progress was made
by Chen, Li, and Li \cite{CLL}, who introduced a direct method of moving planes to remove the restrictions and to greatly simplify the proof process. Afterwards, such effective direct method has been widely applied to establish the symmetry, monotonicity, non-existence, and even to obtain estimates in a boundary layer of solutions for various elliptic equations and systems involving the fractional Laplacian, the fully nonlinear nonlocal operators, the fractional $p$-Laplacians as well as the higher order fractional operators, we refer to \cite{CH, CL, CLLg, CLM, CWu1, DQ, MZ1, MZ2, MZ3, ZL} and the references therein.

Very recently, many substantial advances have been made in the symmetry, monotonicity, and non-existence of positive solutions for fractional parabolic equations of type \eqref{model} with the usual local time derivative $\partial_t u(x,t)$, based on the method of moving planes (cf. \cite{CWNH, CWu2, CWW, JW, WaC, WuC} and the references therein).

In contrast, the geometric behavior of solutions to nonlocal parabolic equations with simultaneous presence of the fractional time derivative and the fractional Laplacian are still lacking.

Here we present a simple example to illustrate the essential difference between the local time derivative and the fractional time derivative.
Let $\Omega$ be a bounded domain in $\mathbb{R}^n$ and $[t_1, t_2]$ be an interval in $\mathbb{R}$. A typical maximum principle for a fractional parabolic problem with the local time derivative $\partial_t$ in the parabolic cylinder $\Omega\times(t_1,t_2]$  can be formulated as:

If $u(x,t)$ is a solution of
\begin{equation*}
\left\{
\begin{array}{ll}
    \partial_t u(x,t)+(-\Delta)^su(x,t)\geq 0 ,~   &(x,t) \in  \Omega\times(t_1,t_2]  , \\
  u(x,t)\geq 0 , ~ &(x,t)  \in \Omega^c\times(t_1,t_2] ,\\
  u(x,t_1)\geq 0 , ~ &x  \in \Omega ,
\end{array}
\right.
\end{equation*}
then $u(x,t)\geq 0$ in $\Omega\times(t_1,t_2]$.

Here the initial condition is given at the time moment $t=t_1$.

While the maximum principle involving the fractional time derivative $\partial_t^\alpha$ established in the subsequent section takes the following form.

Assume that $u(x,t)$ is a solution of
\begin{equation}\label{NMP}
\left\{
\begin{array}{ll}
    \partial_t^\alpha u(x,t)+(-\Delta)^su(x,t)\geq 0 ,~   &(x,t) \in  \Omega\times(t_1,t_2]  , \\
  u(x,t)\geq 0 , ~ &(x,t)  \in \Omega^c\times(t_1,t_2] ,\\
  u(x,t)\geq 0 , ~ &(x,t)  \in \Omega\times(-\infty,t_1] ,
\end{array}
\right.
\end{equation}
then $u(x,t)\geq 0$ in $\Omega\times(t_1,t_2]$.

We can see that due to the nonlocal nature of the fractional time derivative $\partial_t^\alpha$ defined in \eqref{fratime}, in order to guarantee the validity of the classical maximum principle, one must prescribe the initial condition on the whole past time before $t_1$ instead just on the initial time moment $t_1$. If only requiring the initial condition  $u(x,t_1)\geq 0$ for \eqref{NMP}, then in general the conclusion of the maximum principle may not be valid as will be illustrated by the following counterexample.

For simplicity, we consider functions of $t$ only. Let
\begin{equation*}
u(x,t):=u(t):=\left\{
\begin{array}{ll}
    \sin t ,~   &t \in  (0,2\pi]  , \\
  t , ~ &t \in (-R,0] ,\\
  -R, ~ &t  \in (-\infty,-R].
\end{array}
\right.
\end{equation*}
Through a straightforward calculation, we have, for a sufficiently large $R>0$,
\begin{equation*}
\left\{
\begin{array}{ll}
    \partial_t^\alpha u(t)\geq 0 ,~   &\mbox{in} \,\, (0,2\pi] , \\
  u(0)=0.
\end{array}
\right.
\end{equation*}
However this differential inequality and the initial condition at time moment $t=0$ does not guarantee $u(t)$ to be nonnegative in $(0,2\pi]$,
while apparently, $u(t)<0$ in $(\pi, 2\pi)$. The main problem here lies in that $u(t) < 0$ for $t<0$.

This example shows that
the initial condition on the whole past time before $t_1$ is necessary to ensure the validity of
the maximum principle for parabolic problems with nonlocal time derivative.

Among the literature on qualitative properties of fractional parabolic equations with local time derivative, we would like to mention \cite{CWu2}, in which Chen and Wu considered the following
\begin{equation}\label{loctimemodel}
\left\{
\begin{array}{ll}
    \partial_t u(x,t)+(-\Delta)^su(x,t)=f(u(x,t)) ,~  &\mbox{in}\,\,  \mathbb{R}^n_+\times\mathbb{R}  , \\
  u(x,t)= 0 , ~ &\mbox{in}\,\, (\mathbb{R}^n  \backslash \mathbb{R}^n_+) \times\mathbb{R},
\end{array}
\right.
\end{equation}
They show that  the positive solution $u(x,t)$ is strictly increasing with respect
to $x_1$ in $\mathbb{R}^n_+$ for any $t\in\mathbb{R}$ by using the method of moving planes.

In the first step, under a weak assumption that the antisymmetric functions is allowed to tend to infinity, they established a narrow region principle and a maximum principle for antisymmetric functions, which are essential ingredients to carry on the direct method of moving planes. However, in the second step, when they employed the limit argument, they still need to assume that the solution $u(x,t)$ be bounded in $\mathbb{R}^n_+\times\mathbb{R}$.

Inspired by the previous literature, in this paper, we will investigate qualitative properties of solutions of dual-fractional parabolic problem \eqref{model}. Under notably weaker conditions than in \cite{CWu2}, that is, allowing the solutions to tend to infinity in some rate,
we will prove that they are strictly increasing with respect to $x_1$  in $\mathbb{R}^n_+$ for any $t\in\mathbb{R}$. To this aim, we need to introduce some new ideas and approaches to surmount the difficulties caused by both the dual non-locality in space-time and the weaker assumption on the solutions, as we will deliberate after the introduction of each theorem below.

To illustrate the main results of this paper, we start by presenting the notation that will be used in what follows. Let $$T_\lambda:=\{x=(x_1,x')\in \mathbb R^n \mid x_1=\lambda\,\,\mbox{for}\,\,\lambda \in \mathbb R\}$$
be the moving planes perpendicular to $x_1$-axis,
$$ \Sigma_\lambda:= \{x\in \mathbb R^n \mid x_1< \lambda \}\,\,\mbox{and} \,\,\Omega_\lambda:= \{x\in \mathbb R^n_+ \mid x_1< \lambda \}$$
be the region to the left of the hyperplane $T_\lambda$ in $\mathbb{R}^n$ and in $\mathbb{R}^n_+$ respectively. We denote the reflection of $x$ with respect to the hyperplane $T_\lambda$ as
$$x^\lambda:=(2\lambda-x_1, x_2,\cdots, x_n).$$
Let $u(x,t)$ be a solution of \eqref{model} and
$u_{\lambda}(x,t):=u(x^{\lambda},t)$. Define
\begin{equation*}
w_{\lambda} (x,t) :=u_{\lambda}(x,t) - u(x,t),
\end{equation*}
which represents the comparison between the values of $u(x,t)$ and $u(x^{\lambda},t)$. It is obvious that $w_{\lambda}(x,t)$ is an antisymmetric
function of $x$ with respect to the hyperplane $T_\lambda$.

We are now in a position to state our main results of this paper. The first one is the narrow region principle for antisymmetric functions in unbounded domains.
\begin{theorem}\label{NRP}{\rm(Narrow region principle for antisymmetric functions)}
Let $\Omega$ be an unbounded narrow region containing in the narrow slab $\{x\in\Sigma_\lambda \mid \lambda-2l<x_1<\lambda\}$  with some small $l$. Suppose that $$w(x,t)\in(C^{1,1}_{\rm loc}(\Omega) \cap {\mathcal L}_{2s}(\mathbb{R}^n)) \times (C^1(\mathbb{R})\cap {\mathcal L}^{-}_{\alpha}(\mathbb{R}))$$ is lower semi-continuous with respect to $x$ on $\overline{\Omega}$ , satisfying
\begin{equation}\label{NRP1}
  w(x,t)\geq -C(1+|x|^\gamma) \,\, \mbox{for some}\,\, 0<\gamma<2s,
\end{equation}
and
\begin{equation}\label{NRP2}
\left\{
\begin{array}{ll}
    \partial_t^\alpha w(x,t)+(-\Delta)^sw(x,t)=c(x,t) w(x,t) ,~   &(x,t) \in  \Omega\times\mathbb{R}  , \\
  w(x,t)\geq 0 , ~ &(x,t)  \in (\Sigma_\lambda  \backslash \Omega) \times\mathbb{R} , \\
  w(x,t)=- w(x^\lambda,t), &(x,t)     \in \Sigma_\lambda\times\mathbb{R}  ,
\end{array}
\right.
\end{equation}
where $c(x,t)$ is bounded from above.

Then
\begin{equation}\label{NRP3}
w(x,t)\geq0, \,\,\mbox{in} \,\, \Sigma_\lambda\times\mathbb{R}
\end{equation}
for sufficiently small $l$.
 Furthermore, if $w(x,t)$ attains zero at some point $(x^0,t_0)\in\Omega\times\mathbb{R}$, then
\begin{equation}\label{NRP3-1}
 w(x,t)\equiv0, \,\, \mbox{in}\,\, \mathbb{R}^n\times(-\infty,t_0].
\end{equation}
\end{theorem}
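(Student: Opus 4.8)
The plan is to run a contradiction argument built on a carefully chosen family of auxiliary functions, since the weak lower bound \eqref{NRP1} precludes any direct use of a global minimum of $w$ itself. Suppose \eqref{NRP3} fails. Because $w(x,t) \geq -C(1+|x|^\gamma)$ with $\gamma < 2s$, I would introduce an auxiliary function of the form $v(x,t) := w(x,t) + \varepsilon\, g(x)$ (or $w/\psi$ for a suitable positive super-solution-type weight $\psi$ growing like $|x|^{\gamma'}$ with $\gamma < \gamma' < 2s$), chosen so that $v$ is forced to go to $+\infty$ as $|x| \to \infty$ while $\varepsilon g$ contributes a strictly negative amount to $(-\Delta)^s$ in the narrow slab. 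The antisymmetry of $w$ in $x_1$ means it suffices to work in $\Omega$. If $\inf_{\Omega \times \mathbb{R}} v < 0$, then along a minimizing sequence $(x^k, t^k)$ the spatial coordinates stay in a bounded set (by the growth of $\psi$ or $g$), so after translating in $t$ and using the lower semicontinuity in $x$ together with $w(x,\cdot) \in C^1(\mathbb{R})$, one extracts a point $(\bar x, \bar t)$ — or a limiting configuration — where $v$ achieves (or nearly achieves) its negative infimum.

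At such a minimum point I would estimate the two nonlocal operators from below. For $(-\Delta)^s w$ at a near-minimum of $v$, the standard splitting of the integral into the narrow slab and its complement gives: the slab contribution is controlled because the slab has width $2l$ (this is exactly where narrowness enters, producing a factor that can be absorbed), while the complement contribution is handled using antisymmetry — the reflected region contributes a favorable sign — and the growth control \eqref{NRP1}, which keeps $\int |w(y,t)|/|x-y|^{n+2s}\,dy$ finite and of controlled size for $x$ in the bounded set where the minimum lives. The key quantitative inequality will have the shape $(-\Delta)^s w(\bar x,\bar t) \leq \tfrac{C}{l^{2s}} w(\bar x, \bar t) + (\text{harmless terms})$, or after incorporating the weight, a bound that when combined with $c(x,t)$ bounded above leads to a contradiction once $l$ is small. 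For $\partial_t^\alpha w$, since $w(\bar x, \cdot)$ as a function of $t$ attains a minimum (or we arrange the auxiliary function so that the $t$-section does), the Marchaud derivative at that minimum satisfies $\partial_t^\alpha w(\bar x, \bar t) = C_\alpha \int_{-\infty}^{\bar t} \frac{w(\bar x,\bar t) - w(\bar x,\tau)}{(\bar t - \tau)^{1+\alpha}}\,d\tau \leq 0$, using that the integrand is nonpositive at a minimum in $t$; this is precisely the monotone-in-the-past structure noted after the counterexample. Feeding both estimates into the equation $\partial_t^\alpha w + (-\Delta)^s w = c\,w$ at $(\bar x, \bar t)$ and using $w(\bar x,\bar t) < 0$ forces, for $l$ small enough, a strict inequality that contradicts the equation — hence \eqref{NRP3}.

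For the strong-maximum-principle part \eqref{NRP3-1}, suppose $w(x^0, t_0) = 0$ with $(x^0, t_0) \in \Omega \times \mathbb{R}$; by \eqref{NRP3} this is an interior minimum in $x$ and (because $w(x^0,t) \geq 0$ for all $t$ and equals $0$ at $t_0$) a minimum in $t$ as well. Then $\partial_t^\alpha w(x^0,t_0) \leq 0$ and $(-\Delta)^s w(x^0, t_0) \leq 0$, with equality in the sum forcing both to vanish; writing out $(-\Delta)^s w(x^0,t_0) = C_{n,s}\,\mathrm{P.V.}\!\int_{\mathbb{R}^n} \frac{-w(y,t_0)}{|x^0-y|^{n+2s}}\,dy$ together with $w(\cdot,t_0) \geq 0$ on $\Sigma_\lambda$ and antisymmetry gives $w(y, t_0) \equiv 0$ on all of $\mathbb{R}^n$; similarly $\partial_t^\alpha w(x^0,t_0) = 0$ with the nonpositive integrand forces $w(x^0,\tau) \equiv 0$ for $\tau \leq t_0$. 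To upgrade this to $w \equiv 0$ on $\mathbb{R}^n \times (-\infty, t_0]$ I would iterate: having $w(\cdot,\tau)\equiv 0$ on $\Sigma_\lambda$ for a set of times and $w \geq 0$ elsewhere, propagate the vanishing backward in time and outward in space via the same two sign considerations (this is the usual continuation argument for the fractional heat-type operator, bootstrapping on the set $\{w = 0\}$).

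The main obstacle I expect is the low-regularity/unboundedness bookkeeping in the first part: extracting a genuine minimum point of the auxiliary function when $w$ is only lower semicontinuous in $x$, only $C^1$ in $t$, and a priori unbounded — one must choose the weight $\psi$ (or perturbation $\varepsilon g$) so that its decay of the nonlocal operator beats the $|x|^\gamma$ growth allowed in \eqref{NRP1} uniformly, and simultaneously keep the slab estimate of $(-\Delta)^s(\text{weight})$ strictly negative with a constant independent of $\varepsilon$. Getting these two requirements to coexist, and verifying that the translated-in-$t$ limit of $w$ still solves (or sub-solves) the relevant inequality, is the technical heart of the argument; everything else is the by-now-standard moving-planes machinery adapted to the dual nonlocal operator.
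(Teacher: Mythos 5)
Your overall strategy is the one the paper uses: divide $w$ by a positive weight $h$ growing like $|x'|^{\beta}$ with $\gamma<\beta<2s$ (so that $\bar w=w/h$ has nonnegative liminf at spatial infinity), extract the narrow-region factor from the inequality $(-\Delta)^s h/h\geq C_1/l^{2s}$ on the slab, use antisymmetry to give the reflected half-space a favorable sign, use $\partial_t^\alpha\bar w\leq 0$ at a minimum in $t$, and contradict the equation for $l$ small; the second half of your argument (both nonlocal terms nonpositive at a zero minimum, forcing $w(\cdot,t_0)\equiv 0$ in space and then $w(x^0,\cdot)\equiv 0$ backward in time, then propagating) is also essentially the paper's.

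There is, however, one concrete gap, precisely at the step you yourself flag as the technical heart: you never actually produce a point at which both nonlocal estimates can be evaluated. The spatial coercivity of $\bar w$ guarantees only that for each \emph{fixed} $t$ the minimum over $x\in\Omega$ is attained; since $t$ ranges over all of $\mathbb{R}$, the infimum $-m$ of $\bar w$ over $\Omega\times\mathbb{R}$ need not be attained, and neither of your two fallbacks works. The claim that the minimizing sequence $(x^k,t_k)$ has bounded spatial coordinates is unjustified ($x^k$ may escape to infinity along with $t_k$), and ``translating in $t$ and passing to a limiting configuration'' requires compactness of the translates of $w$, i.e.\ some uniform bound on $w$ --- exactly the hypothesis this theorem is designed to avoid, and the reason the limit argument of the local-time-derivative case cannot be reused here. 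The paper's resolution is a perturbation in $t$ only: with $\varepsilon_k:=m-m_k\to 0$ along an approximate minimizing sequence, set $v_k:=\bar w-\varepsilon_k\eta_k(t)$ with $\eta_k$ a smooth cutoff equal to $1$ near $t_k$ and supported in $(t_k-2,t_k+2)$. Then $v_k(x^k,t_k)=-m$ while $v_k=\bar w\geq -m$ outside the time support, so $v_k$ attains its global infimum at some genuine point $(\bar x^k,\bar t_k)$ with $|\bar t_k-t_k|<2$; there $\partial_t^\alpha v_k\leq 0$, hence $\partial_t^\alpha\bar w(\bar x^k,\bar t_k)\leq C\varepsilon_k$, and the extra error $C\varepsilon_k$ is harmless in the final comparison $\tfrac{C_1}{l^{2s}}\tfrac{m_k}{m}\leq \tfrac{C\varepsilon_k}{m}+C$. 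Without this (or an equivalent) device your argument does not close.
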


Note that the condition \eqref{NRP1} allows $w(x,t)$ to tend to negative infinity as $|x|\rightarrow\infty$ as long as its decreasing rate is not greater than $|x|^\gamma$.
The previous research has shown that when studying maximum principles in unbounded domains, one usually need to impose some decay (to zero) assumption on $w$ at infinity, or at least to assume that $w$ is bounded from below. In this respect, Chen and Wu \cite{CWu2} weakened the condition to the type of \eqref{NRP1} for the first time by constructing a suitable auxiliary function to establish the narrow region principle for the fractional parabolic equation involving the local time derivative $\partial_t$. However, the auxiliary function given in \cite{CWu2} has failed to work in our situation due to the presence of fractional time derivative $\partial_t^\alpha$. Here we introduce a new approach by applying a perturbation technique in $t$ to construct a sequence of auxiliary functions to estimate the dual-fractional operator $\partial_t^\alpha+(-\Delta)^s$ along a sequence of approximate minimum points.

The aforementioned narrow region principle is a crucial ingredient to carry out the method of moving planes as it provides a starting point, then
from which we will move the plane $T_\lambda$ along $x_1$ direction to the right as long as inequality (\ref{NRP3}) (with $w(x,t)$ replaced by
$w_\lambda (x,t)$) holds. Let
$$ \lambda_0 = \sup \{ \lambda \mid w_\mu (x,t) \geq 0, \, \forall \, (x,t) \in \Sigma_\lambda \times \mathbb{R}, \mu \leq \lambda \}. $$
We will show that $\lambda_0 = + \infty.$ This is usually done by a contradiction argument. Suppose otherwise, then there exists $\lambda_k \searrow \lambda_0$ such that $\inf w_{\lambda_k} < 0$. In \cite{CWu2}, the authors took limit along a proper subsequence of such $\{w_{\lambda_k}\}$, then applied a Hopf type lemma to the limiting equation to derive a contradiction. In this process, it is necessary to require the  solutions be bounded.

In order to remove the boundedness assumption on solutions in \cite{CWu2}, we introduce a new idea here. Instead of taking limit along a subsequence of $\{w_{\lambda_k}\}$, we apply the {\em averaging effects} (see the theorems below)  twice, first on the solution $u$, and then on $w_{\lambda_0}$, to derive a contradiction for sufficiently large $k$. We believe that this new approach will become a very useful tool in investigating an unbounded sequence of solutions.

\begin{theorem}\label{AE} {\rm(Averaging effects)}
Let $D$ be a domain in $\mathbb{R}^n$ and $t_0\in\mathbb{R}$ be a real number. For any $x^0\in \mathbb{R}^n$, if there exists a positive radius $r>0$ such that $B_r(x^0)\cap \overline{D}=\varnothing$ as shown in Figure 1 and
\begin{equation}\label{AE1-1}
  u(x,t)\geq C_0>0\,\,\mbox{ in}\,\, D\times(t_0-r^{\frac{2s}{\alpha}},t_0+r^{\frac{2s}{\alpha}}].
\end{equation}
Assume that $$u(x,t)\in\left(C^{1,1}_{\rm loc}(B_r(x^0)) \cap {\mathcal L}_{2s}(\mathbb{R}^n)\right) \times \left(C^1([t_0-r^{\frac{2s}{\alpha}},t_0+r^{\frac{2s}{\alpha}}])\cap {\mathcal L}^{-}_{\alpha}(\mathbb{R})\right)$$ is lower semi-continuous in $x$ on $\overline{B_r(x^0)}$, satisfying
\begin{equation}\label{AE1}
\left\{
\begin{array}{ll}
    \partial_t^\alpha u(x,t)+(-\Delta)^su(x,t)\geq-\varepsilon,~   &(x,t) \in  B_r(x^0)\times(t_0-r^{\frac{2s}{\alpha}},t_0+r^{\frac{2s}{\alpha}}] , \\
  u(x,t)\geq 0 , ~ &(x,t)  \in B^c_r(x^0)\times(t_0-r^{\frac{2s}{\alpha}},t_0+r^{\frac{2s}{\alpha}}] ,\\
  u(x,t)\geq 0 , ~ &(x,t)  \in B_r(x^0)\times(-\infty,t_0-r^{\frac{2s}{\alpha}}] ,
\end{array}
\right.
\end{equation}
for some sufficiently small $\varepsilon>0$. Then there exists a positive constant $$C_1=C_1\left(\alpha,n,s, C_0, \operatorname{diam}(D), \operatorname{dist}(x^0, D)\right)$$ such that
\begin{equation*}
  u(x^0,t_0)\geq C_1>0,
\end{equation*}
where $\operatorname{diam}(D)$ represents the diameter of domain $D$, and $\operatorname{dist}(x^0, D)$ denotes the distance from point $x^0$ to domain $D$.
\end{theorem}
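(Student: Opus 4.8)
\section*{Proof proposal}

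The plan is to run a comparison argument on the parabolic cylinder $Q:=B_r(x^0)\times(t_0-r^{\frac{2s}{\alpha}},t_0]$ against an explicit subsolution that carries the positive mass of $u$ on $D$ to the point $(x^0,t_0)$, and then to invoke the maximum principle \eqref{NMP}. A first reduction removes the defect $\varepsilon$. Let $\phi_r(x):=(r^2-|x|^2)_+^s$, which satisfies $(-\Delta)^s\phi_r\equiv\kappa_{n,s}$ on $B_r$ for the fractional torsion constant $\kappa_{n,s}>0$, while $\partial_t^\alpha\phi_r\equiv0$. Then $\bar u:=u+\frac{\varepsilon}{\kappa_{n,s}}\phi_r(\cdot-x^0)$ satisfies \eqref{AE1} with $\varepsilon=0$, it coincides with $u$ on $D$ (since $\phi_r\equiv0$ off $B_r(x^0)$), and it is $\ge u\ge0$ outside $B_r(x^0)$; because $\bar u(x^0,t_0)=u(x^0,t_0)+\frac{\varepsilon}{\kappa_{n,s}}r^{2s}$, it suffices to prove the estimate for $\varepsilon=0$ and then subtract $\frac{\varepsilon r^{2s}}{\kappa_{n,s}}$, which is harmless once $\varepsilon$ is small relative to the $\varepsilon=0$ bound.

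For $\varepsilon=0$ I would use the barrier $\psi(x,t):=n(t)\,g(x)-q(t)\,\phi_\star(x)$. Here $g$ is the $s$-harmonic extension of the exterior datum $C_0\chi_D$, so that $(-\Delta)^sg\equiv0$ on $B_r(x^0)$, $g=C_0\chi_D$ off $B_r(x^0)$, $0<g\le C_0$ inside, and $g(x^0)=C_0\int_D P_r(x^0,y)\,dy$ is bounded below explicitly in terms of $C_0,n,s$ and the position of $D$ relative to $x^0$ via the Poisson kernel $P_r$ of the ball. The corrector is $\phi_\star:=\kappa_{n,s}^{-1}\bigl(\sup_{B_r}g\bigr)\phi_r(\cdot-x^0)\ge0$, supported in $\overline{B_r(x^0)}$, with $(-\Delta)^s\phi_\star\equiv\sup_{B_r}g\ge g$ on $B_r(x^0)$ and $\phi_\star(x^0)=\kappa_{n,s}^{-1}r^{2s}\sup_{B_r}g$. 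Finally $n$ is a smooth nondecreasing time switch with $n\equiv0$ on $(-\infty,t_0-r^{\frac{2s}{\alpha}}]$ and $n\equiv1$ on $[t_0,\infty)$, and $q$ is chosen so that $q\ge\partial_t^\alpha n$ and $\partial_t^\alpha q\ge0$; the simplest choice $q=\partial_t^\alpha n$ already works for $\alpha<\frac12$, since a nondecreasing function vanishing in the past has nonnegative fractional derivative of every order in $(0,1)$, so $\partial_t^\alpha q=\partial_t^{2\alpha}n\ge0$, while $\alpha\ge\frac12$ needs an iterated version of this device. With these choices one computes, on $Q$,
$$\partial_t^\alpha\psi+(-\Delta)^s\psi=(\partial_t^\alpha n-q)\,g-(\partial_t^\alpha q)\,\phi_\star-q\bigl(\tfrac{}{}\sup_{B_r}g-g\bigr)\le0,$$
using $(-\Delta)^sg=0$, $(-\Delta)^s\phi_\star=\sup_{B_r}g$, $0\le g\le\sup_{B_r}g$ and $\phi_\star\ge0$ on $B_r(x^0)$. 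On the parabolic boundary, $\psi=n(t)C_0\chi_D\le C_0\le u$ on $(\mathbb R^n\setminus B_r(x^0))\times(t_0-r^{\frac{2s}{\alpha}},t_0]$, and $\psi\equiv0$ on $B_r(x^0)\times(-\infty,t_0-r^{\frac{2s}{\alpha}}]$ because both $n$ and $q$ vanish there; hence $u-\psi\ge0$ on the parabolic boundary and \eqref{NMP} applied to $u-\psi$ with $\Omega=B_r(x^0)$ gives $u\ge\psi$ on $Q$, so
$$u(x^0,t_0)\ \ge\ \psi(x^0,t_0)\ =\ g(x^0)-q(t_0)\,\phi_\star(x^0).$$

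The decisive step is to show that $g(x^0)-q(t_0)\phi_\star(x^0)$ is a positive constant of the claimed form, and I expect this to be the main obstacle. On the one hand the memory of the time derivative forces $q(t_0)\ge(\partial_t^\alpha n)(t_0)\ge C_\alpha\int_{-\infty}^{t_0-r^{2s/\alpha}}(t_0-\tau)^{-1-\alpha}\,d\tau=\Gamma(1-\alpha)^{-1}r^{-2s}$, so no switch is cheap on the $r^{-2s}$ scale; on the other hand $q(t_0)\phi_\star(x^0)=q(t_0)\kappa_{n,s}^{-1}r^{2s}\sup_{B_r}g$, and a Poisson-kernel Harnack inequality bounds $\sup_{B_r}g$ by $g(x^0)$ times a constant depending only on $n,s,\operatorname{diam}(D)$ and $\operatorname{dist}(x^0,D)$ (once $r$ is normalized so that all of $B_r(x^0)$ sees $D$ at comparable distance). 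This yields a lower bound of the shape $g(x^0)\bigl(1-\tfrac{C(n,s,\operatorname{diam} D,\operatorname{dist})}{\kappa_{n,s}\Gamma(1-\alpha)}\bigr)$, and the real work lies in the bookkeeping that makes the parenthesis a definite positive number: optimizing the pair $(n,q)$ — in particular exploiting the whole window $(t_0-r^{\frac{2s}{\alpha}},t_0+r^{\frac{2s}{\alpha}}]$ in the maximum principle rather than only $(t_0-r^{\frac{2s}{\alpha}},t_0]$ — sharpening the Harnack constant when $D$ is far, and carrying out the $\alpha\ge\frac12$ construction of $q$. Once the bracket is pinned down to a positive number, halving it to absorb the $\varepsilon$-error from the first reduction produces the constant $C_1=C_1(\alpha,n,s,C_0,\operatorname{diam}(D),\operatorname{dist}(x^0,D))$.
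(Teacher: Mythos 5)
Your overall architecture (build a subsolution carrying the mass of $u$ on $D$ into $B_r(x^0)$, then invoke the bounded-domain maximum principle, Theorem \ref{MP}) is the right one and matches the paper's. The $\varepsilon$-reduction via the torsion function is also fine. But the proof has a genuine, and in fact structural, gap at exactly the point you flag: the positivity of $g(x^0)-q(t_0)\phi_\star(x^0)$. As you compute, the memory of the Marchaud derivative forces $q(t_0)\geq \frac{1}{\Gamma(1-\alpha)}r^{-2s}$ for \emph{any} admissible switch, while $\phi_\star(x^0)=\kappa_{n,s}^{-1}r^{2s}\sup_{B_r}g$, so the correction is at least $\frac{1}{\Gamma(1-\alpha)\kappa_{n,s}}\sup_{B_r}g \geq \frac{1}{\Gamma(1-\alpha)\kappa_{n,s}}\,g(x^0)$. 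There is no small parameter anywhere in your barrier: rescaling $\psi$ by $\delta$ rescales both $g(x^0)$ and the correction identically, and "using the whole window" does not help because the lower bound on $q(t_0)$ comes from the integral over $(-\infty,t_0-r^{2s/\alpha}]$, where $n$ \emph{must} vanish to satisfy the initial comparison. So the conclusion hinges on $\Gamma(1-\alpha)\kappa_{n,s}$ exceeding a Harnack-type constant that is at least $1$, which is false in general (e.g.\ $n=1$, $s$ small, $\alpha$ bounded away from $1$, where $\kappa_{1,s}\to 1$ and $\Gamma(1-\alpha)$ is of order one). Two further steps are also left unfinished: the construction of $q$ when $\alpha\geq\tfrac12$, and the semigroup identity $\partial_t^\alpha\partial_t^\alpha n=\partial_t^{2\alpha}n$ for the Marchaud derivative, which is not automatic and would need justification even in the regime where you use it.

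The paper's proof avoids this competition entirely by putting the influence of $D$ into the \emph{equation} rather than into the barrier's value. Its subsolution is $\underline{u}=u\chi_D+\delta\,\phi(x)\eta(t)$ with $\phi$ the torsion bump and $\eta$ a time cutoff; the key point is that for $x\in B_r(x^0)$ (disjoint from $\overline D$) one has $-(-\Delta)^s(u\chi_D)(x)=C_{n,s}\int_D \frac{u(y,t)}{|x-y|^{n+2s}}\,\operatorname{d}\!y\geq C_2>0$, a strictly positive source term determined by $C_0$, $\operatorname{diam}(D)$ and $\operatorname{dist}(x^0,D)$. The amplitude $\delta$ is then a free parameter chosen \emph{after} $C_2$ is known, namely $\delta=C_2r^{2s}/(2C)$, so that the combined cost of the bump, $\delta\phi\,\partial_t^\alpha\eta+\delta\eta\,(-\Delta)^s\phi\leq C\delta/r^{2s}$, is absorbed by $C_2$ together with $\varepsilon\leq C_2/2$. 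The resulting constant $C_1=C\delta$ is small but unconditionally positive. If you want to salvage your route, the fix is precisely this: keep $u\chi_D$ (or equivalently your $g$) as a forcing term rather than as the leading order of the barrier, and let the barrier's amplitude be a small multiple of the induced source.
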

 \begin{center}
\begin{tikzpicture}[node distance = 0.3cm]
\draw [thick](-1,1) ellipse  [x radius=1.5cm, y radius=0.8cm];
\fill[pink!50] (-1,1) ellipse  [x radius=1.5cm, y radius=0.8cm];
\path (-1,1.3) [blue][semithick] node [ font=\fontsize{10}{10}\selectfont] {$u\geqslant C_0>0$};
\path (-1,0.8) node  {$D$};
\draw (2.0,1) circle (0.8);
\path (2.0,1) [very thin,fill=black]  circle(1 pt) node at (2.15, 0.8) {$ x^0$};
\draw [thick] [dashed] [red] (2.0,1)--(2.7,1.4) node at (2.35, 1.35)  [red] {$r$};
\node [below=0.5cm, align=flush center,text width=16cm] at (0,-0.1)
        {Figure 1. The positional relationship between the region $D$ and the ball $B_r(x^0)$ in $\mathbb{R}^n$. };
\end{tikzpicture}
\end{center}
\begin{theorem}\label{AEA} {\rm(Averaging effects for antisymmetric functions)}
Let $D\subset\Sigma_\lambda$ be a domain and $t_0\in\mathbb{R}$ be a real number. For any $x^0\in \Sigma_\lambda$\,, if there exists a ball $B_r(x^0)\subset\Sigma_\lambda$ such that $B_r(x^0)\cap \overline{D}=\varnothing$ and $r\leq \frac{\operatorname{dist}(x^0,T_\lambda)}{2}$ as shown in Figure 2, and
\begin{equation}\label{AEA1}
  w(x,t)\geq C_0>0\,\,\mbox{ in}\,\, D\times(t_0-r^{\frac{2s}{\alpha}},t_0+r^{\frac{2s}{\alpha}}].
\end{equation}
Assume that $$w(x,t)\in\left(C^{1,1}_{\rm loc}(B_r(x^0)) \cap {\mathcal L}_{2s}(\mathbb{R}^n)\right) \times \left(C^1([t_0-r^{\frac{2s}{\alpha}},t_0+r^{\frac{2s}{\alpha}}])\cap {\mathcal L}^{-}_{\alpha}(\mathbb{R})\right)$$ is lower semi-continuous in $x$ on $\overline{B_r(x^0)}$, satisfying
\begin{equation}\label{AEA2}
\left\{
\begin{array}{ll}
    \partial_t^\alpha w(x,t)+(-\Delta)^sw(x,t)\geq -\varepsilon,~   &(x,t) \in  B_r(x^0)\times(t_0-r^{\frac{2s}{\alpha}},t_0+r^{\frac{2s}{\alpha}}] , \\
  w(x,t)\geq 0 , ~ &(x,t)  \in \left(\Sigma_\lambda\setminus B_r(x^0)\right)\times(t_0-r^{\frac{2s}{\alpha}},t_0+r^{\frac{2s}{\alpha}}] ,\\
  w(x,t)\geq 0 , ~ &(x,t)  \in B_r(x^0)\times(-\infty,t_0-r^{\frac{2s}{\alpha}}] ,\\
  w(x,t)=- w(x^\lambda,t), &(x,t)     \in \Sigma_\lambda\times\mathbb{R} ,
\end{array}
\right.
\end{equation}
for some sufficiently small $\varepsilon>0$. Then there exists a positive constant $$C_1=C_1\left(\alpha,n,s, C_0, \operatorname{diam}(D), \operatorname{dist}(x^0, D),\operatorname{dist}(\partial D,T_\lambda),\operatorname{dist}(x^0,T_\lambda)\right)$$ such that
\begin{equation*}
  w(x^0,t_0)\geq C_1>0,
\end{equation*}
where $\operatorname{dist}(\partial D,T_\lambda)$ stands for the distance between the boundary $\partial D$ and the hyperplane $T_\lambda$.
\end{theorem}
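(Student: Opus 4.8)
The plan is to mimic the proof of Theorem~\ref{AE}, the only essentially new point being the nonlocal interaction of $w$ with the half space $\Sigma_\lambda^c=\{x_1>\lambda\}$, which will be handled by the antisymmetry together with the hypothesis $r\le\operatorname{dist}(x^0,T_\lambda)/2$.

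\emph{Step 1 (reflecting the operator).} For $x\in B_r(x^0)\subset\Sigma_\lambda$ I would split the principal value defining $(-\Delta)^sw(x,t)$ over $\Sigma_\lambda$ and $\Sigma_\lambda^c$, substitute $y\mapsto y^\lambda$ in the second integral, and use $w(y^\lambda,t)=-w(y,t)$, to obtain
\begin{equation*}
(-\Delta)^sw(x,t)=C_{n,s}\,{\rm P.V.}\!\int_{\Sigma_\lambda}\!\bigl(w(x,t)-w(y,t)\bigr)K_\lambda(x,y)\,dy+2C_{n,s}\,w(x,t)\!\int_{\Sigma_\lambda}\!\frac{dy}{|x-y^\lambda|^{n+2s}},
\end{equation*}
with $K_\lambda(x,y):=|x-y|^{-n-2s}-|x-y^\lambda|^{-n-2s}>0$ for $x,y\in\Sigma_\lambda$. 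Thus inside $B_r(x^0)$ the operator $\partial_t^\alpha+(-\Delta)^s$ acts on $w$ like a dual fractional parabolic operator with the positive kernel $K_\lambda$ plus a nonnegative zeroth-order term, while \eqref{AEA2} prescribes nonnegative ``parabolic boundary data'' for $w$, quantitatively $\ge C_0$ on $D$. Because $r\le\operatorname{dist}(x^0,T_\lambda)/2$, the balls $B_r(x^0)$ and $B_r(x^0)^\lambda$ are disjoint with gap $\ge\operatorname{dist}(x^0,T_\lambda)$ and stay at distance $\gtrsim\operatorname{dist}(\partial D,T_\lambda)$ from $D^\lambda$; hence $K_\lambda(x,y)$ is comparable to $|x-y|^{-n-2s}$ on the relevant region (with constants depending only on $\operatorname{dist}(x^0,T_\lambda)$, $\operatorname{dist}(\partial D,T_\lambda)$, $n$, $s$), and the last integral above is finite and controlled.

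\emph{Step 2 (barrier and comparison).} I would then construct an antisymmetric barrier $\underline v(x,t)$ on $\Sigma_\lambda\times(-\infty,t_0+r^{2s/\alpha}]$ that is a subsolution of the equation in \eqref{AEA2}, vanishes for $t\le t_0-r^{2s/\alpha}$, and satisfies $\underline v\le w$ on $(\Sigma_\lambda\setminus B_r(x^0))\times(t_0-r^{2s/\alpha},t_0+r^{2s/\alpha}]$ (being $\le 0$ off $D$ and $\le C_0$ on $D$). Concretely, $\underline v$ is built by combining (a) the fractional Poisson kernel of $B_r(x^0)$ for the reflected operator of Step~1, applied to the exterior datum equal to a suitable positive constant on $\overline D$ and $0$ elsewhere on $\Sigma_\lambda\setminus B_r(x^0)$ — so that, by the comparison of $K_\lambda$ with $|x-y|^{-n-2s}$ and $B_r(x^0)\cap\overline D=\varnothing$, its value at $x^0$ is bounded below by a positive constant of the advertised form; (b) a strictly negative defect, a small multiple of $(r^2-|x-x^0|^2)_+^s$ (whose fractional Laplacian is a positive constant in $B_r(x^0)$); and (c) an appropriate nonnegative time weight. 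The defect leaves room to absorb both the memory term $\partial_t^\alpha$ and the right-hand side $-\varepsilon$ for $\varepsilon$ small; alternatively one may take $\underline v$ to solve $\partial_t^\alpha\underline v+(-\Delta)^s\underline v=-\varepsilon$ exactly with these data, using the solvability theory for the dual fractional parabolic equation. Then $w-\underline v$ is antisymmetric, satisfies $\partial_t^\alpha(w-\underline v)+(-\Delta)^s(w-\underline v)\ge 0$ in $B_r(x^0)\times(t_0-r^{2s/\alpha},t_0+r^{2s/\alpha}]$, and is $\ge 0$ on the parabolic boundary, so a maximum principle of the type \eqref{NMP} in its antisymmetric version — applied in the bounded cylinder $B_r(x^0)\times(t_0-r^{2s/\alpha},t_0+r^{2s/\alpha}]$, and using the nonnegativity of the data on the whole past exactly as there — gives $w\ge\underline v$ in that cylinder. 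Evaluating at $(x^0,t_0)$ yields $w(x^0,t_0)\ge\underline v(x^0,t_0)=:C_1>0$, with $C_1$ depending only on $\alpha,n,s,C_0,\operatorname{diam}(D),\operatorname{dist}(x^0,D),\operatorname{dist}(\partial D,T_\lambda),\operatorname{dist}(x^0,T_\lambda)$.

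The main obstacle is Step~2: the barrier must simultaneously have value at $(x^0,t_0)$ bounded below by an explicit positive constant of the stated form — which is exactly the quantitative statement that $D$ ``sees'' $x^0$ through the reflected kernel $K_\lambda$ — be a genuine subsolution up to the initial slice $\{t=t_0-r^{2s/\alpha}\}$ despite the memory operator $\partial_t^\alpha$ coupling to the flat past and despite the right-hand side $-\varepsilon$, and respect the one-sided constraint $\underline v\le w$ on the lateral and past boundary. Keeping the reflected-kernel quantities comparable to their Euclidean counterparts — which is what the condition $r\le\operatorname{dist}(x^0,T_\lambda)/2$ buys — is what lets the estimates underlying the non-antisymmetric Theorem~\ref{AE} carry over to the present setting.
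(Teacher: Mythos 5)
Your overall architecture coincides with the paper's: build an antisymmetric sub-solution supported around $B_r(x^0)$, quantify the influence of $D$ through the reflected kernel $K_\lambda(x,y)=|x-y|^{-n-2s}-|x-y^\lambda|^{-n-2s}$, and compare via the antisymmetric maximum principle in the bounded cylinder (the paper's Theorem \ref{MPA}); the role you assign to $r\le\operatorname{dist}(x^0,T_\lambda)/2$ is also the right one. The genuine gap is that the barrier itself is never constructed: you delegate it either to ``the fractional Poisson kernel of $B_r(x^0)$ for the reflected operator'' in the parabolic setting with the Marchaud derivative, or to an exact solvability theory for $\partial_t^\alpha+(-\Delta)^s$ with prescribed exterior and past data. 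Neither object is available in the paper, and the quantitative heart of the theorem --- a lower bound for $\underline v(x^0,t_0)$ of the advertised form --- would then rest on unproven lower bounds for that kernel. Worse, in your version the explicit bump enters as a \emph{negative} defect, so the positivity at $(x^0,t_0)$ must come entirely from the unestimated Poisson part; you have correctly identified this as ``the main obstacle'' but not resolved it.

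The paper closes exactly this gap with an elementary explicit barrier that requires no solvability theory:
\begin{equation*}
\underline w(x,t)=w(x,t)\chi_{D\cup D^\lambda}(x)+\delta\,\bigl(\phi(x)-\phi_\lambda(x)\bigr)\eta(t),\qquad \phi(x)=\Bigl(1-\bigl|\tfrac{x-x^0}{r}\bigr|^2\Bigr)_+^s,
\end{equation*}
with $\eta$ a time cut-off as in Corollary \ref{coro1}. The characteristic-function term solves nothing, but for $x\in B_r(x^0)$ (disjoint from $\overline{D\cup D^\lambda}$) its fractional Laplacian is just $-C_{n,s}\int_{D\cup D^\lambda}w(y,t)|x-y|^{-n-2s}\,dy$, which by antisymmetry equals $-C_{n,s}\int_{D}w(y,t)K_\lambda(x,y)\,dy\le -C_2<0$ with $C_2$ depending only on the listed geometric quantities; this single positive source absorbs the right-hand side $-\varepsilon$, the cost $\delta\eta(-\Delta)^s(\phi-\phi_\lambda)\le C\delta/r^{2s}$, and the memory term $\delta\Phi\,\partial_t^\alpha\eta$, upon choosing $\varepsilon=C_2/2$ and $\delta=C_2r^{2s}/(2C)$. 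Since $x^0\notin D\cup D^\lambda$ and $\phi_\lambda(x^0)=0$ (here $r\le\operatorname{dist}(x^0,T_\lambda)/2$ separates the two bumps), Theorem \ref{MPA} gives $w(x^0,t_0)\ge\underline w(x^0,t_0)=\delta=:C_1$. Replacing your Poisson-kernel step with this construction turns your outline into a complete proof; as written, the proposal stops short of the one computation that makes the theorem quantitative.
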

 \begin{center}
\begin{tikzpicture}[node distance = 0.3cm]
\draw (-2,1)[thick] ellipse  [x radius=1.5cm, y radius=0.8cm];
\fill[pink!50] (-2,1) ellipse  [x radius=1.5cm, y radius=0.8cm];
\path (-2,1.3) [blue][semithick] node [ font=\fontsize{10}{10}\selectfont] {$w\geqslant C_0>0$};
\path (-0.1,2.4) node [ font=\fontsize{12}{12}\selectfont] {$\Sigma_\lambda$};
\path (-2,0.8) node [ font=\fontsize{10}{10}\selectfont] {$D$};
\draw (1,1.2) circle (0.5);
\path (1,1.2) [very thin,fill=black]  circle(1 pt) node at (1.15, 1)[font=\fontsize{10}{10}\selectfont]  {$ x^0$};
\draw [thick] [dashed] [red] (1,1.2)--(1.5,1.3) node at (1.25, 1.4) [font=\fontsize{10}{10}\selectfont] [red] {$r$};
\draw  [->,semithick](-4,-0.2)--(4,-0.2) node [anchor=north west] {$x_1$};
\draw [semithick] (2,-1.1) -- (2,3);
\path (2.35,-0.75) node  [ font=\fontsize{10}{10}\selectfont] {$T_{\lambda}$};
\node [below=1cm, align=flush center,text width=16cm] at (0,-0.1)
        {Figure 2. The positional relationship between the region $D$ and the ball $B_r(x^0)$ in $\Sigma_\lambda$. };
\end{tikzpicture}
\end{center}
\begin{remark}
The so-called averaging effects reveal that the positiveness of the solution to the fractional time-diffusion equations in some region $D$ will be diffused to any other region $B$ disjointed from $D$. The averaging effects depend on the distance between such two regions, the closer the distance, the greater the effect.
\end{remark}

Based on the vital techniques above, we establish the direct method of moving planes suitable for the nonlocal parabolic equation \eqref{model} to derive the following monotonicity result.

\begin{theorem}\label{MainR}{\rm(Monotonicity in a half space)} Let
$$u(x,t)\in\left(C^{1,1}_{\rm loc}(\mathbb{R}^n_+) \cap {\mathcal L}_{2s}(\mathbb{R}^n)\right) \times \left(C^1(\mathbb{R})\cap {\mathcal L}^{-}_{\alpha}(\mathbb{R})\right)$$ be a positive solution of \eqref{model}.
Assume that $u(x,t)$ is uniformly continuous with respect to $x$, satisfying
\begin{equation*}
  u(x,t)\leq C(1+|x|^\gamma)\,\, \mbox{for some}\,\,0<\gamma<2s.
\end{equation*}
If the nonlinear term $f\in C^1([0,+\infty))$ satisfies $f(0)\geq 0$, $f'(0)\leq0$ and $f'$ is bounded from above, then the solution $u(x,t)$ is strictly increasing with respect to $x_1$ in $\mathbb{R}^n_+$ for any $t\in\mathbb{R}$.
\end{theorem}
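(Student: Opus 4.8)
The plan is to run the direct method of moving planes in the $x_1$-direction, with Theorem~\ref{NRP} supplying the starting position and Theorems~\ref{AE}--\ref{AEA} driving the plane to infinity. Keep the notation $x^\lambda$, $u_\lambda(x,t)=u(x^\lambda,t)$, $w_\lambda=u_\lambda-u$. Because $x\mapsto x^\lambda$ is an isometry and $\partial_t^\alpha$ ignores $x$, the reflected function $u_\lambda$ solves $\partial_t^\alpha u_\lambda+(-\Delta)^su_\lambda=f(u_\lambda)$ in $\Sigma_\lambda\times\mathbb{R}$, so that, by the mean value theorem,
\begin{equation*}
\partial_t^\alpha w_\lambda+(-\Delta)^sw_\lambda=c_\lambda(x,t)\,w_\lambda\quad\text{in }\Omega_\lambda\times\mathbb{R},\qquad c_\lambda(x,t):=\int_0^1 f'\big(u+\theta w_\lambda\big)\,d\theta ,
\end{equation*}
where $c_\lambda$ is bounded from above because $f'$ is (the segment of integration lies in $[0,+\infty)$ since $u\ge0$ and $u_\lambda\ge0$). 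Also $w_\lambda$ is antisymmetric about $T_\lambda$; on $(\Sigma_\lambda\setminus\Omega_\lambda)\times\mathbb{R}$, where $x_1\le0$ so $u\equiv0$ but $x^\lambda\in\mathbb{R}^n_+$ and hence $u_\lambda>0$, we get $w_\lambda>0$; and $w_\lambda\ge-u\ge-C(1+|x|^\gamma)$ by hypothesis; the regularity and integrability requirements on $w_\lambda$ are inherited from those on $u$. For $\lambda>0$ small the slab $\Omega_\lambda=\{0<x_1<\lambda\}$ is narrow, so Theorem~\ref{NRP} yields $w_\lambda\ge0$ in $\Sigma_\lambda\times\mathbb{R}$; this is the starting point.

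\medskip

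Set $\lambda_0:=\sup\{\lambda>0:w_\mu\ge0\text{ in }\Sigma_\mu\times\mathbb{R}\text{ for all }\mu\le\lambda\}>0$ and suppose, for contradiction, $\lambda_0<+\infty$. Continuity in $\lambda$ gives $w_{\lambda_0}\ge0$ in $\Sigma_{\lambda_0}\times\mathbb{R}$. Evaluating the equation at an interior zero of $w_{\lambda_0}$ and using antisymmetry together with $w_{\lambda_0}\ge0$ forces $(-\Delta)^sw_{\lambda_0}\le0$ and $\partial_t^\alpha w_{\lambda_0}\le0$ there, hence $w_{\lambda_0}\equiv0$ on that time slice, which is impossible since $w_{\lambda_0}=u_{\lambda_0}>0$ on $\{x_1\le0\}$; therefore $w_{\lambda_0}>0$ in $\Omega_{\lambda_0}\times\mathbb{R}$. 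By maximality of $\lambda_0$ there are $\lambda_k\searrow\lambda_0$ and $(x^k,t^k)\in\Omega_{\lambda_k}\times\mathbb{R}$ with $w_{\lambda_k}(x^k,t^k)<0$. Since $x^{k,\lambda_0}$ and $x^{k,\lambda_k}$ differ only in the first coordinate, by $2(\lambda_k-\lambda_0)$, the uniform continuity of $u$ in $x$ gives, with $\omega$ a modulus of continuity,
\begin{equation*}
0\le w_{\lambda_0}(x^k,t^k)=w_{\lambda_k}(x^k,t^k)+\big(u(x^{k,\lambda_0},t^k)-u(x^{k,\lambda_k},t^k)\big)\le\omega\big(2|\lambda_k-\lambda_0|\big),
\end{equation*}
so $w_{\lambda_0}(x^k,t^k)\to0$. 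If the points $(x^k,t^k)$ stay in a compact set (after a $t$-translation, allowed by invariance of \eqref{model} in $t$), a standard limiting argument together with the strong maximum principle just used and the boundary condition on $\{x_1=0\}$ produces a contradiction; the essential case is $|x^k|\to\infty$.

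\medskip

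To handle $|x^k|\to\infty$ I would use the averaging effects twice. \emph{First}, apply Theorem~\ref{AE} to $u$: starting from the seed given by $u>0$ on a fixed ball inside $\mathbb{R}^n_+$ over a short time window around $t^k$, and noting that the sign hypotheses in \eqref{AE1} hold for $u$ with a small error where $u$ is small (using $f(0)\ge0$ and continuity of $f$), one propagates a uniform bound $u\ge C_0>0$ over a slab of the form $\{2\lambda_0\le y_1\le2\lambda_0+1\}$ on the $x'$-columns and time windows associated with the $x^k$. Reflecting across $T_{\lambda_0}$ and using $u\equiv0$ on $\{x_1\le0\}$ turns this into $w_{\lambda_0}=u_{\lambda_0}\ge C_0>0$ on a domain $D_k\subset\Sigma_{\lambda_0}$ lying within a bounded distance of $x^k$. \emph{Second}, apply Theorem~\ref{AEA} to $w_{\lambda_0}$ with seed $D_k$: since $w_{\lambda_0}\ge0$ on $\Sigma_{\lambda_0}$, the equation $\partial_t^\alpha w_{\lambda_0}+(-\Delta)^sw_{\lambda_0}=c_{\lambda_0}w_{\lambda_0}$ can be recast as $\partial_t^\alpha w_{\lambda_0}+(-\Delta)^sw_{\lambda_0}\ge-\varepsilon$ on a small ball around $x^k$ (absorbing the possibly large negative part of $c_{\lambda_0}$ against the smallness of $w_{\lambda_0}$ there, the growth restriction $\gamma<2s$ controlling the tail of $(-\Delta)^s$), and because $\operatorname{diam}(D_k)$ and $\operatorname{dist}(x^k,D_k)$ are bounded uniformly in $k$, the conclusion gives $w_{\lambda_0}(x^k,t^k)\ge C_1>0$ with $C_1$ independent of $k$. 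This contradicts $w_{\lambda_0}(x^k,t^k)\to0$, so $\lambda_0=+\infty$. Hence $w_\lambda\ge0$ in $\Sigma_\lambda\times\mathbb{R}$ for every $\lambda>0$, i.e. $u$ is nondecreasing in $x_1$; strictness follows because a zero of any $w_\lambda$ at an interior point would, by the strong maximum principle, force $w_\lambda\equiv0$ on a whole past time slice, again impossible.

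\medskip

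The crux is the third paragraph, i.e. closing the scheme at infinity without decay or boundedness of $u$. I expect the two hardest points to be: (i) arranging the seed domain $D_k$ so that it tracks the escaping minimizers $x^k$ at \emph{bounded} distance — equivalently, upgrading the pointwise positivity of $u$ to a locally uniform one through Theorem~\ref{AE}; and (ii) verifying that the perturbed inequality $\partial_t^\alpha w_{\lambda_0}+(-\Delta)^sw_{\lambda_0}\ge-\varepsilon$ demanded by Theorem~\ref{AEA} genuinely holds with $\varepsilon$ as small as required on a neighbourhood of $x^k$, using only $u\le C(1+|x|^\gamma)$ with $\gamma<2s$ and that $c_{\lambda_0}$ is merely bounded above, not below.
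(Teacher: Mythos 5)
Your overall architecture coincides with the paper's: narrow region principle (Theorem \ref{NRP}) for the start, then a double application of the averaging effects (Theorems \ref{AE} and \ref{AEA}) to push $\lambda_0$ to $+\infty$, with the second averaging effect run through the boundary $\{x_1=0\}$ where $u$ vanishes. However, the step you flag as difficulty (i) is not a technicality but the actual crux, and your proposal leaves it unresolved: Theorem \ref{AE} needs a \emph{seed} region $D$ on which $u\geq C_0>0$ uniformly, located at \emph{bounded} distance from the escaping points $x^k$, and ``$u>0$ on a fixed ball'' cannot supply this, since the constant $C_1$ in Theorem \ref{AE} degenerates as $\operatorname{dist}(x^0,D)\to\infty$. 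The paper manufactures the seed from the hypothesis $f'(0)\leq 0$ --- which your argument never uses --- via a dichotomy on $q_k:=\sup_{\Sigma_{\lambda_k}^-\times\mathbb{R}}C_{\lambda_k}$. If $q_k\to 0$, Theorem \ref{MPAF} (maximum principle with small coefficient $c_0$, not merely bounded above) already gives $w_{\lambda_k}\geq 0$, a contradiction; if $q_k\geq\delta_0>0$ along a subsequence, then since $C_{\lambda_k}$ is an average of $f'$ over the segment joining $u$ and $u_{\lambda_k}$ and $f'(0)\leq 0$ with $f'$ continuous, there must exist points $(x^k,t_k)$ in the negative set with $u(x^k,t_k)\geq\varepsilon_0>0$ and $w_{\lambda_k}(x^k,t_k)$ close to the infimum. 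Uniform continuity of $u$ then yields $u\geq\varepsilon_0/2$ on a ball $B_{r_0}(x^k)$ of \emph{fixed} radius centered at $x^k$ itself --- that is the seed, and it automatically tracks the minimizers.

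Two further ingredients of the paper are absent from your plan. First, before the averaging effects can be set up one must show $\operatorname{dist}(x^k,T_{\lambda_k})\geq\delta_0>0$ uniformly (otherwise the balls around $x^k$ do not fit in $\Omega_{\lambda_0}$ and the geometric constraint $r\leq\operatorname{dist}(x^0,T_\lambda)/2$ of Theorem \ref{AEA} fails); the paper proves this with a separate perturbation argument involving cut-off functions $\eta_k$ and an estimate of $\partial_t^\alpha+(-\Delta)^s$ at approximate minima of $v_k=w_{\lambda_k}-m_k^2\eta_k$. Second, your ``compact case'' fallback (``a standard limiting argument'' after a $t$-translation) would reintroduce exactly the compactness/boundedness of $u$ that the theorem dispenses with; the paper avoids any limiting argument and treats bounded and escaping $x^k$ uniformly by the same two-step averaging scheme. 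Your concern (ii) is legitimate but is resolved in the paper by running Theorem \ref{AEA} inside a proof by contradiction: one \emph{assumes} $w_{\lambda_0}<\varepsilon$ on the target ball, which (together with $w_{\lambda_0}\geq 0$ and $f\in C^1$) yields the required $\geq-C\varepsilon$ right-hand side there. As it stands, your proof is a correct skeleton with the load-bearing step --- the construction of the uniform seed via $f'(0)\leq 0$ and the $q_k$ dichotomy --- missing.
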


\begin{remark}
To our knowledge, Theorem \ref{MainR} is the first result about the geometric shapes of solutions to the nonlocal parabolic equations involving the fractional time derivative $\partial_t^\alpha$ and the fractional Laplacian $(-\Delta)^s$. Particularly, the holistic approach presented here is also valid for parabolic equations of type \eqref{model} with the usual local time derivative $\partial_t$. It is worth mentioning that our results are still novel even if $\partial_t^\alpha$ is replaced by  $\partial_t$, since we remove the assumption that $u$ is uniformly bounded in $x$ as compared to the result in \cite{CWu2}\,.
\end{remark}

The remainder part of this paper proceeds as follows. Section \ref{2} is composed by the proofs of various maximum principles used for this study, including the aforementioned narrow region principle. Section \ref{3} is devoted to establishing the averaging effects applicable to the dual fractional operator $\partial_t^\alpha+(-\Delta)^s$. In section \ref{4}\,, we complete the proof of Theorem \ref{MainR}\,.  The last section presents some useful lemmas.

\section{Maximum principles }\label{2}
Without assuming any decay and boundedness conditions on the antisymmetric function $w(x,t)$ with respect to the space variable $x$, this section begins by establishing the narrow region principle ( Theorem \ref{NRP} ) in unbounded domains.
From now on,
$C$ denotes a constant whose value may be different from line to
line, and only the relevant dependence is specified in what follows.

\begin{proof}
[\bf Proof of Theorem \ref{NRP}\,.] \,

We argue by contradiction to derive \eqref{NRP3}.

Note that condition \eqref{NRP1} allows $w(x,t)$ to go to negative infinity as $|x|\rightarrow\infty$;
hence, the minimizing sequence of $w(x,t)$ in $x$ may leak to infinity and the minimum may not be attained.
To overcome this impediment, we employ the following positive auxiliary function
$$h(x):=\left[\left(1-\frac{(x_1-(\lambda-l))^2}{l^2}\right)_{+}^s+1\right](1+|x'|^2)^{\frac{\beta}{2}}$$
for some $\gamma<\beta<2s$
to ensure that
\begin{equation}\label{NRP-l}
  \displaystyle\lim_{|x|\rightarrow\infty}\bar{w}(x,t)
:=\displaystyle\lim_{|x|\rightarrow\infty}\frac{w(x,t)}{h(x)}\geq0.
\end{equation}
It follows that for each fixed $t\in\mathbb{R}$, if there is a point $x\in\Omega$ with $\bar{w}(x,t)<0$, then there must exists $x(t)\in \Omega$ such that
\begin{equation}\label{NRP4}
  \bar{w}(x(t),t)=\displaystyle\min_{x\in \Omega}\bar{w}(x,t)<0.
\end{equation}
Combining \eqref{NRP1} with the definition of $\bar{w}(x,t)$ and $\gamma<\beta$,
one deduces that $\bar{w}(x(t),t)$ is bounded. Hence, if \eqref{NRP3} is not valid, then there exists a positive constant $m$ such that
\begin{equation}\label{NRP5}
  \inf_{\Omega\times\mathbb{R}}\bar{w}(x,t)=\inf_{\mathbb{R}}\bar{w}(x(t),t)=:-m<0.
\end{equation}
This implies  that there exists a sequence $\{(x^k,t_k)\}\subset\Omega\times\mathbb{R}$ such that
\begin{equation*}
  \bar{w}(x^k,t_k)=-m_k\rightarrow -m \,\,\mbox{as}\,\, k\rightarrow\infty.
\end{equation*}
Let $\varepsilon_k:=m-m_k$, then it is obvious that $\varepsilon_k\geq0$ and $\varepsilon_k\rightarrow 0$ as $k\rightarrow0$. We further need to introduce the following auxiliary function
\begin{equation*}
  v_k(x,t):=\bar{w}(x,t)-\varepsilon_k\eta_k(t)
\end{equation*}
to remedy scenario that the infimum of $\bar{w}(x(t),t)$ may not be attained due to $t\in\mathbb{R}$,
where
$$\eta_k(t)=\eta(t-t_k)\in C_0^\infty\left((-2+t_k,2+t_k)\right)$$
is a smooth cut-off function satisfying
$$\eta_k(t)\equiv1\,\, \mbox{in}\,\, [-1+t_k,1+t_k],\,\,\mbox{and}\,\, 0\leq\eta_k(t)\leq 1.$$
One one hand, we have
\begin{equation*}
  v_k(x^k,t_k)=\bar{w}(x^k,t_k)-\varepsilon_k=-m_k-m+m_k=-m.
\end{equation*}
On the other hand, if $|t-t_k|\geq2$ and $x\in\Omega$, then it follows from \eqref{NRP5} that
\begin{equation*}
  v_k(x,t)=\bar{w}(x,t)\geq-m.
\end{equation*}
Thus, there exists $(\bar{x}^k,\bar{t}_k)\in\Omega\times(-2+t_k,2+t_k)$ such that
\begin{equation}\label{NRP6}
 -m-\varepsilon_k\leq v_k(\bar{x}^k,\bar{t}_k)= \inf_{\Omega\times\mathbb{R}}v_k(x,t)\leq-m.
\end{equation}
From this, it is not difficult to see that
\begin{equation}\label{NRP6-1}
  -m\leq\bar{w}(\bar{x}^k,\bar{t}_k)\leq-m+\varepsilon_k=-m_k<0.
\end{equation}

Next, by a direct calculation, we obtain
\begin{equation*}
  \partial_t^\alpha v_k(\bar{x}^k,\bar{t}_k)=C_\alpha\int_{-\infty}^{\bar{t}_k}\frac{v_k(\bar{x}^k,\bar{t}_k)-v_k(\bar{x}^k,\tau)}
  {(\bar{t}_k-\tau)^{1+\alpha}}\operatorname{d}\!\tau\leq0.
\end{equation*}
Then in terms of the definition of $v_k(x,t)$ and Lemma \ref{mlem1}\,, we derive
\begin{equation}\label{NRP7}
 \partial_t^\alpha \bar{w}(\bar{x}^k,\bar{t}_k)\leq \varepsilon_k\partial_t^\alpha \eta_k(\bar{t}_k)\leq C\varepsilon_k.
\end{equation}
To proceed, a combination of the definition of $\bar{w}(x,t)$, \eqref{NRP2}, \eqref{NRP4}, \eqref{NRP6-1} with $|\bar{x}^k-y|<|\bar{x}^k-y^\lambda|$ and $h(y)> h(y^\lambda)$ for $y\in\Sigma_\lambda$
yields that
\begin{eqnarray}\label{NRP8}% \nonumber to remove numbering (before each equation)
(-\Delta)^sw(\bar{x}^k,\bar{t}_k)&=&(-\Delta)^s\left(\bar{w}(\bar{x}^k,\bar{t}_k)h(\bar{x}^k)\right) \nonumber\\
&=& \bar{w}(\bar{x}^k,\bar{t}_k)(-\Delta)^sh(\bar{x}^k)+C_{n,s}P.V.\int_{\mathbb{R}^n}
\frac{h(y)\left( \bar{w}(\bar{x}^k,\bar{t}_k)- \bar{w}(y,\bar{t}_k)\right)}{|\bar{x}^k-y|^{n+2s}}\operatorname{d}\!y\nonumber\\
&\leq&\bar{w}(\bar{x}^k,\bar{t}_k)(-\Delta)^sh(\bar{x}^k)+C_{n,s}\int_{\Sigma_\lambda}
\frac{h(y)\bar{w}(\bar{x}^k,\bar{t}_k)- w(y,\bar{t}_k)}{|\bar{x}^k-y^\lambda|^{n+2s}}\operatorname{d}\!y\nonumber\\
&&+C_{n,s}\int_{\Sigma_\lambda}
\frac{h(y^\lambda)\bar{w}(\bar{x}^k,\bar{t}_k)+ w(y,\bar{t}_k)}{|\bar{x}^k-y^\lambda|^{n+2s}}\operatorname{d}\!y\nonumber\\
&\leq&\bar{w}(\bar{x}^k,\bar{t}_k)(-\Delta)^sh(\bar{x}^k)+C_{n,s}\int_{\Sigma_\lambda}
\frac{2h(y^\lambda)\bar{w}(\bar{x}^k,\bar{t}_k)}{|\bar{x}^k-y^\lambda|^{n+2s}}\operatorname{d}\!y\nonumber\\
&\leq&\bar{w}(\bar{x}^k,\bar{t}_k)(-\Delta)^sh(\bar{x}^k)\nonumber\\
&\leq&\frac{C_1}{l^{2s}}h(\bar{x}^k)\bar{w}(\bar{x}^k,\bar{t}_k),
\end{eqnarray}
where on the last line we used the fact established in \cite[Lemma 2.1]{CWu2} that there exits a positive constant $C_1$ such that
\begin{equation*}
  \frac{(-\Delta)^sh(x)}{h(x)}\geq\frac{C_1}{l^{2s}} \,\,\mbox{for all}\,\, \lambda-2l<x_1<\lambda \,\,\mbox{with sufficiently small}\,\, l.
\end{equation*}
Substituting \eqref{NRP7} and \eqref{NRP8} into \eqref{NRP2}, and combining \eqref{NRP6-1} with  the bounded-ness of $c(x,t)$ from above, we obtain
\begin{eqnarray*}
% \nonumber to remove numbering (before each equation)
-\frac{C_1}{l^{2s}}m_kh(\bar{x}^k) &\geq& \frac{C_1}{l^{2s}}h(\bar{x}^k)\bar{w}(\bar{x}^k,\bar{t}_k)\\
 &\geq& (-\Delta)^sw(\bar{x}^k,\bar{t}_k) \\
   &=& - \partial_t^\alpha w(\bar{x}^k,\bar{t}_k)+c(\bar{x}^k,\bar{t}_k)w(\bar{x}^k,\bar{t}_k)\\
   &=& - h(\bar{x}^k)\partial_t^\alpha \bar{w}(\bar{x}^k,\bar{t}_k)+c(\bar{x}^k,\bar{t}_k)h(\bar{x}^k)\bar{w}(\bar{x}^k,\bar{t}_k)\\
   &\geq&-C\varepsilon_kh(\bar{x}^k)-Cmh(\bar{x}^k).
\end{eqnarray*}
Now dividing both side of the preceding inequality by $-mh(\bar{x}^k)$, we deduce that
\begin{equation*}
  \frac{C_1}{l^{2s}}\leftarrow\frac{C_1}{l^{2s}}\frac{m_k}{m}\leq\frac{C\varepsilon_k}{m}+ C\rightarrow C,
\end{equation*}
as $k\rightarrow\infty$.
Therefore, we derive a contradiction for sufficiently small $l$,
which concludes that \eqref{NRP3} is true.

Finally, we prove \eqref{NRP3-1}. It follows from \eqref{NRP3} that
$$w(x^0,t_0)=\min_{\Sigma_\lambda\times\mathbb{R}}w(x,t)=0.$$

If $w(x,t_0)\not\equiv0$ in $\Sigma_\lambda$, on one hand, we compute
\begin{equation*}
  \partial_t^\alpha w(x^0,t_0)=-C_\alpha\int_{-\infty}^{t_0}\frac{w(x^0,\tau)}
  {(t_0-\tau)^{1+\alpha}}\operatorname{d}\!\tau\leq0
\end{equation*}
and
\begin{equation*}
  (-\Delta)^sw(x^0,t_0)=C_{n,s} P.V. \int_{\Sigma_{\lambda}} w(y,t_0)\left(\frac{1}{|x^0-y^{\lambda}|^{n+2s}}-\frac{1}{|x^0-y|^{n+2s}}\right)\operatorname{d}\!y<0,
\end{equation*}
then
$$\partial_t^\alpha w(x^0,t_0)+(-\Delta)^sw(x^0,t_0)<0.$$
On the other hand, by virtue of the equation in \eqref{NRP2}, we have
\begin{equation}\label{NRP9}
  \partial_t^\alpha w(x^0,t_0)+(-\Delta)^sw(x^0,t_0)=0.
\end{equation}
This contradiction implies that $w(x,t_0)\equiv0$ in $\Sigma_\lambda$.
In addition, the antisymmetry of $w(x,t)$ with respect to $x$ infers that
$$w(x,t_0)\equiv0 \,\, \mbox{in} \,\,\mathbb{R}^n.$$

Thereby for any fixed $x\in\Sigma_\lambda$ such that $w(x,t)\not\equiv0$ in $(-\infty,t_0)$, by estimating similarly as above, we have
\begin{equation*}
  \partial_t^\alpha w(x,t_0)=-C_\alpha\int_{-\infty}^{t_0}\frac{w(x,\tau)}
  {(t_0-\tau)^{1+\alpha}}\operatorname{d}\!\tau<0
\end{equation*}
and
\begin{equation*}
  (-\Delta)^sw(x,t_0)=0,
\end{equation*}
 which also means that
$$\partial_t^\alpha w(x,t_0)+(-\Delta)^sw(x,t_0)<0.$$
It contradicts the equation \eqref{NRP9}, then $w(x,t)\equiv0$ in $\Sigma_\lambda\times(-\infty,t_0]$. Applying the antisymmetry of $w(x,t)$ with respect to $x$ again, we deduce that
$$w(x,t)\equiv0 \,\, \mbox{in}\,\, \mathbb{R}^n\times(-\infty,t_0].$$
Therefore, the proof of Theorem
\ref{NRP} is completed.
\end{proof}
In fact, if the region $\Omega$ given in Theorem \ref{NRP} is not narrow, then it suffices to assume that the positive part of coefficient $c(x,t)$
is small to
guarantee the validity of the similar maximum principle, as pointed out in the following result.
\begin{theorem}\label{MPAF}{\rm(Maximum principle for antisymmetric functions)}
Let $\Omega$ be an unbounded domain in $\Sigma_\lambda$ with a finite width in $x_1$ direction. Suppose that $$w(x,t)\in(C^{1,1}_{\rm loc}(\Omega) \cap {\mathcal L}_{2s}(\mathbb{R}^n)) \times (C^1(\mathbb{R})\cap {\mathcal L}^{-}_{\alpha}(\mathbb{R}))$$ is lower semi-continuous for $x$ on $\overline{\Omega}$, satisfying
\begin{equation}\label{MPAF1}
   w(x,t)\geq -C(1+|x|^\gamma) \,\, \mbox{for some}\,\, 0<\gamma<2s,
\end{equation}
and
\begin{equation}\label{MPAF2}
\left\{
\begin{array}{ll}
    \partial_t^\alpha w(x,t)+(-\Delta)^sw(x,t)=c(x,t) w(x,t) ,~   &(x,t) \in  \Omega\times\mathbb{R}  , \\
  w(x,t)\geq 0 , ~ &(x,t)  \in (\Sigma_\lambda  \backslash \Omega) \times\mathbb{R} , \\
  w(x,t)=- w(x^\lambda,t), &(x,t)     \in \Sigma_\lambda\times\mathbb{R}  ,
\end{array}
\right.
\end{equation}
where $c(x,t)\leq c_0$ in $\Omega\times\mathbb{R}$ for some small positive constant $c_0$.

Then there holds that
\begin{equation}\label{MPAF3}
w(x,t)\geq0 \,\,\mbox{in} \,\,  \Sigma_\lambda\times\mathbb{R}.
\end{equation}
Furthermore, if $w(x^0,t_0)=0$  for some point $(x^0,t_0)\in\Omega\times\mathbb{R}$, then
\begin{equation*}
  w(x,t)\equiv0 \,\, \mbox{in}\,\, \mathbb{R}^n\times(-\infty,t_0].
\end{equation*}
\end{theorem}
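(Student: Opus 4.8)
The plan is to run the contradiction scheme of the proof of Theorem \ref{NRP} essentially verbatim, with the smallness of the width $l$ there replaced by the smallness of $c_0$ here. Since $\Omega$ has finite width $d$ in the $x_1$-direction, we may assume $\Omega\subset\{\lambda-d<x_1<\lambda\}$. I would fix an auxiliary function of the same shape as before,
$$h(x):=\Big[\big(1-\tfrac{(x_1-(\lambda-d/2))^2}{(d/2)^2}\big)_+^{\,s}+1\Big](1+|x'|^2)^{\frac{\beta}{2}},\qquad \gamma<\beta<2s,$$
for which the estimate $(-\Delta)^s h(x)\ge c_1 h(x)$ holds throughout $\{\lambda-d<x_1<\lambda\}$ with a constant $c_1=c_1(n,s,d)>0$; this is the analogue of \cite[Lemma 2.1]{CWu2}, the only point being that the \emph{positivity} of $c_1$ requires merely that the slab be of finite width, not that it be thin (thinness would only matter if $c$ were unbounded, so that one needed $c_1$ large). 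The factor $(1+|x'|^2)^{\beta/2}$ with $\beta>\gamma$ guarantees, through \eqref{MPAF1}, that $\bar w:=w/h$ satisfies $\liminf_{|x|\to\infty}\bar w(x,t)\ge 0$ for each $t$, so that if \eqref{MPAF3} fails the (possibly non-attained) infimum of $\bar w$ over $\Omega\times\mathbb{R}$ is a finite negative number $-m$.

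Next I would reproduce the twofold approximation: choose $(x^k,t_k)$ with $\bar w(x^k,t_k)=-m_k\to-m$, set $\varepsilon_k=m-m_k\to 0^+$, and consider $v_k(x,t)=\bar w(x,t)-\varepsilon_k\eta_k(t)$ with $\eta_k$ a cutoff concentrated near $t_k$; exactly as in Theorem \ref{NRP} this yields points $(\bar x^k,\bar t_k)\in\Omega\times(t_k-2,t_k+2)$ at which $v_k$ attains its infimum over $\Omega\times\mathbb{R}$, with $-m\le\bar w(\bar x^k,\bar t_k)\le-m_k<0$. At such a point $\partial_t^\alpha v_k(\bar x^k,\bar t_k)\le 0$ together with Lemma \ref{mlem1} gives $\partial_t^\alpha\bar w(\bar x^k,\bar t_k)\le C\varepsilon_k$, and the chain of estimates in \eqref{NRP8} — using the antisymmetry of $w$ and $|\bar x^k-y|<|\bar x^k-y^\lambda|$, $h(y)>h(y^\lambda)$ for $y\in\Sigma_\lambda$ — gives $(-\Delta)^s w(\bar x^k,\bar t_k)\le\bar w(\bar x^k,\bar t_k)(-\Delta)^s h(\bar x^k)\le -c_1 m_k\,h(\bar x^k)$. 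Substituting both into $\partial_t^\alpha w+(-\Delta)^s w=c\,w$ at $(\bar x^k,\bar t_k)$, writing $w=h\bar w$ and $\partial_t^\alpha w=h\,\partial_t^\alpha\bar w$, and using $c\le c_0$ with $\bar w(\bar x^k,\bar t_k)<0$ (so $c\,\bar w\ge c_0\bar w\ge-c_0 m$), I obtain
\begin{equation*}
-c_1 m_k h(\bar x^k)\ \ge\ (-\Delta)^s w(\bar x^k,\bar t_k)\ =\ -h(\bar x^k)\partial_t^\alpha\bar w(\bar x^k,\bar t_k)+c(\bar x^k,\bar t_k)h(\bar x^k)\bar w(\bar x^k,\bar t_k)\ \ge\ -C\varepsilon_k h(\bar x^k)-c_0 m\,h(\bar x^k).
\end{equation*}
Dividing by $-m\,h(\bar x^k)<0$ gives $c_1\tfrac{m_k}{m}\le\tfrac{C\varepsilon_k}{m}+c_0$, and letting $k\to\infty$ yields $c_1\le c_0$, contradicting the hypothesis that $c_0<c_1$. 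Hence \eqref{MPAF3} holds.

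The strong maximum principle part is then obtained exactly as in the last paragraph of the proof of Theorem \ref{NRP}, without any change: at a point $(x^0,t_0)$ where $w$ takes the value $0$ (a global minimum over $\Sigma_\lambda\times\mathbb{R}$ by \eqref{MPAF3}), if $w(\cdot,t_0)\not\equiv 0$ in $\Sigma_\lambda$ then $(-\Delta)^s w(x^0,t_0)<0$ and $\partial_t^\alpha w(x^0,t_0)\le 0$, while the right-hand side $c(x^0,t_0)w(x^0,t_0)=0$, a contradiction; so $w(\cdot,t_0)\equiv 0$ in $\Sigma_\lambda$, hence in $\mathbb{R}^n$ by antisymmetry, and propagating backwards in time (if $w(x,\cdot)\not\equiv 0$ on $(-\infty,t_0)$ for some $x\in\Sigma_\lambda$, then $\partial_t^\alpha w(x,t_0)<0$ while $(-\Delta)^s w(x,t_0)=0$, again contradicting the equation) gives $w\equiv 0$ in $\mathbb{R}^n\times(-\infty,t_0]$. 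There is no new analytic obstacle beyond Theorem \ref{NRP}: the only genuine point of care is checking that the auxiliary function $h$ retains a strictly positive lower bound $(-\Delta)^s h\ge c_1 h$ on a slab of arbitrary but finite width, and keeping track that it is now the gap $c_1-c_0>0$, rather than a negative power of $l$, that closes the argument; one should also be mindful of the sign of $c(x,t)$ (bounded above by $c_0$, but possibly very negative) when estimating the term $c\,\bar w$ at the negative minimum value.
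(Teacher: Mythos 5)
Your overall scheme is the same as the paper's: rerun the contradiction argument of Theorem \ref{NRP} with the width-dependent gain $C_1/l^{2s}$ replaced by a fixed constant $c_1$ and close the argument via $c_1\le c_0$ versus $c_0<c_1$; the perturbation in $t$, the estimate at the approximate minimum, and the strong maximum principle part are all handled exactly as the paper does.

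There is, however, one genuine gap, and it sits precisely at the one point you flag and then dismiss. Your claim that $(-\Delta)^s h\ge c_1 h$ with $c_1=c_1(n,s,d)>0$ holds on a slab of \emph{arbitrary} finite width for the unmodified auxiliary function $h(x)=g(x_1)(1+|x'|^2)^{\beta/2}$ is false in general. Take a point with $x'=0$: there the factor $p(x')=(1+|x'|^2)^{\beta/2}$ attains its global minimum, and separating variables in the singular integral gives
\begin{equation*}
(-\Delta)^s h(x_1,0)\;=\;c_n\,(-\Delta)^s_{1d}g(x_1)\;-\;C_{n,s}\int_{\mathbb{R}^n}\frac{g(y_1)\bigl(p(y')-1\bigr)}{|x-y|^{n+2s}}\,\operatorname{d}\!y\;\le\;\frac{C'}{d^{2s}}-A,
\end{equation*}
where $A>0$ is a fixed constant (finite because $\beta<2s$) independent of the width. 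Since $h(x_1,0)\in[1,2]$, for $d$ large the ratio $(-\Delta)^s h/h$ is negative at such points, so the positivity of $c_1$ really does use thinness of the slab — it is not only needed to beat a large coefficient $c$. This is exactly why the paper modifies the auxiliary function to $h(x)=\bigl[(1-(x_1-(\lambda-a))^2/a^2)_+^s+1\bigr](1+|bx'|^2)^{\beta/2}$ with a small constant $b=b(a)$: rescaling $x'$ shrinks the negative contribution of the $x'$-factor to order $b^{2s}$, so that the $O(a^{-2s})$ gain from the $x_1$-bump dominates, while the growth $|x'|^{\beta}$ at infinity (hence the coercivity forcing $\liminf\bar w\ge0$) is preserved. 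With that modification your argument goes through verbatim; without it, the key inequality you rely on fails on wide slabs.
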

\begin{proof}
[\bf Proof.] Since the width of the domain $\Omega$ in $x_1$ direction is finite, then it may well be assumed that $\Omega$ is contained in $\{x\in \Sigma_\lambda\mid \lambda-2a< x_1<\lambda\}$ for some $a>0$.
Thereby all proofs follow almost verbatim from the proof of Theorem \ref{NRP}\,, the essential  difference is that we choose
the auxiliary function
$$h(x):=\left[\left(1-\frac{(x_1-(\lambda-a))^2}{a^2}\right)_{+}^s+1\right](1+|bx'|^2)^{\frac{\beta}{2}}$$
for some $\gamma<\beta<2s$, where the sufficiently small positive constant $b$ depends on $a$ such that
$$\frac{(-\Delta)^sh(x)}{h(x)}\geq\frac{C_1}{a^{2s}}$$ for any $x\in \Omega$ and some constant $C_1>0$. In analogy with the notation and calculations in the proof of Theorem \ref{NRP}\,, if \eqref{MPAF3} is false, then we can finally deduce that
\begin{equation*}
  \frac{C_1}{a^{2s}}\leftarrow\frac{C_1}{a^{2s}}\frac{m_k}{m}\leq\frac{C\varepsilon_k}{m}+ c_0\rightarrow c_0,
\end{equation*}
as $k\rightarrow\infty$. It suffices to select the upper bound $c_0$ of the coefficient $c(x,t)$ to satisfy $c_0<\frac{C_1}{a^{2s}}$, then we derive a contradiction. Hence, \eqref{MPAF3} is valid.
Furthermore, by proceeding similarly as the proof of Theorem \ref{NRP}\,, we can verify that the strong maximum principle holds. This completes the proof of Theorem \ref{MPAF}.
\end{proof}

In the sequel, we will also use the following two simple maximum principles in bounded domains, where the minimum can be attained as compared to unbounded domains.
\begin{theorem}\label{MP} {\rm(Maximum principle in bounded domains)}
Let $\Omega$ be a bounded domain in $\mathbb{R}^n$ and $t_1<t_2$ be two real numbers.
Suppose that $$u(x,t)\in(C^{1,1}_{\rm loc}(\Omega) \cap {\mathcal L}_{2s}(\mathbb{R}^n)) \times (C^1([t_1,t_2])\cap {\mathcal L}^{-}_{\alpha}(\mathbb{R}))$$ is lower semi-continuous in $x$ on $\overline{\Omega}$, satisfying
\begin{equation}\label{MP1}
\left\{
\begin{array}{ll}
    \partial_t^\alpha u(x,t)+(-\Delta)^su(x,t)\geq 0 ,~   &(x,t) \in  \Omega\times(t_1,t_2]  , \\
  u(x,t)\geq 0 , ~ &(x,t)  \in \Omega^c\times(t_1,t_2] ,\\
  u(x,t)\geq 0 , ~ &(x,t)  \in \Omega\times(-\infty,t_1] ,
\end{array}
\right.
\end{equation}
then $u(x,t)\geq 0$ in $\Omega\times(t_1,t_2]$.
\end{theorem}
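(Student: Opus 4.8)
The plan is to argue by contradiction, exploiting the fact that on the \emph{bounded} cylinder $\overline{\Omega}\times[t_1,t_2]$ the function $u$ --- lower semicontinuous in $x$ and $C^1$ in $t$ --- attains its minimum, so that no perturbation device in the spirit of the proof of Theorem~\ref{NRP} is needed here.

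Suppose, contrary to the assertion, that $-m:=\min_{\overline{\Omega}\times[t_1,t_2]}u<0$, attained at some point $(x^0,t_0)$; existence of such a point follows from lower semicontinuity in $x$, continuity in $t$, and compactness of $\overline{\Omega}\times[t_1,t_2]$. I would first locate $(x^0,t_0)$. Since $u\ge0$ on $\Omega^c\times(t_1,t_2]$ and $\partial\Omega\subset\Omega^c$, the minimizing point cannot lie on $\partial\Omega$, whence $x^0\in\Omega$; and since $u(x^0,t_1)\ge0>-m$ by the third line of \eqref{MP1}, necessarily $t_0\in(t_1,t_2]$. Moreover $(x^0,t_0)$ is a global minimum of $u(\cdot,t_0)$ over all of $\mathbb{R}^n$: for $y\in\overline{\Omega}$ this holds by definition of $-m$, and for $y\in\Omega^c$ one has $u(y,t_0)\ge0>-m=u(x^0,t_0)$.

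Next I would evaluate the two nonlocal operators at $(x^0,t_0)$, which is meaningful under the hypotheses $u\in C^{1,1}_{\rm loc}(\Omega)\cap\mathcal{L}_{2s}(\mathbb{R}^n)$ and $u(x^0,\cdot)\in C^1([t_1,t_2])\cap\mathcal{L}^{-}_{\alpha}(\mathbb{R})$ (the $C^1$ regularity in $t$ tames the singularity of the Marchaud kernel at $\tau=t_0$, and $\mathcal{L}^{-}_{\alpha}$ its tail). Because $u(\cdot,t_0)$ attains its global minimum at $x^0$,
\[(-\Delta)^su(x^0,t_0)=C_{n,s}\,\PV\!\int_{\mathbb{R}^n}\frac{u(x^0,t_0)-u(y,t_0)}{|x^0-y|^{n+2s}}\operatorname{d}\!y\le0.\]
For the Marchaud derivative I would split $(-\infty,t_0)$ into $(-\infty,t_1]$ and $[t_1,t_0)$: on $[t_1,t_0)$ the numerator $u(x^0,t_0)-u(x^0,\tau)$ is nonpositive since $u(x^0,\cdot)$ is minimized at $t_0$ over $[t_1,t_2]$, while on $(-\infty,t_1]$ the bound $u(x^0,\tau)\ge0$ gives $u(x^0,t_0)-u(x^0,\tau)\le-m$, so that
\[\partial_t^\alpha u(x^0,t_0)\le C_\alpha\int_{-\infty}^{t_1}\frac{-m}{(t_0-\tau)^{1+\alpha}}\operatorname{d}\!\tau=\frac{C_\alpha}{\alpha}\cdot\frac{-m}{(t_0-t_1)^{\alpha}}<0,\]
the strictness coming from $t_0>t_1$. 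Adding the two displays yields $\partial_t^\alpha u(x^0,t_0)+(-\Delta)^su(x^0,t_0)<0$, contradicting the differential inequality in \eqref{MP1}; hence $u\ge0$ in $\Omega\times(t_1,t_2]$.

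The step requiring the most care is the very first one --- ensuring that $\inf_{\overline{\Omega}\times[t_1,t_2]}u$ is finite and attained --- which rests on minimizing $u(\cdot,t)$ over the compact set $\overline{\Omega}$ for each $t$ and then using the regularity in $t$ over the compact interval $[t_1,t_2]$. The genuinely parabolic ingredient, however, is the strict negativity of $\partial_t^\alpha u(x^0,t_0)$: it relies on prescribing $u\ge0$ on the \emph{entire} past $\Omega\times(-\infty,t_1]$ rather than merely at $t=t_1$, precisely the phenomenon illustrated by the counterexample in the introduction.
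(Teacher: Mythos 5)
Your proof is correct and follows essentially the same route as the paper's: locate a negative minimum of $u$ on the compact cylinder, note that $(-\Delta)^s u(x^0,t_0)\le 0$ and that $\partial_t^\alpha u(x^0,t_0)<0$ strictly (the strictness coming from the tail over $(-\infty,t_1]$ where $u\ge 0$), and contradict the differential inequality. The only difference is presentational --- you extract the strict sign from the time term alone, whereas the paper also gets it from the integral over $\Omega^c$ --- and your extra care about attainment of the minimum is a harmless refinement of the same argument.
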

\begin{proof}[\bf Proof.]
The proof goes by contradiction. If the conclusion is not valid, then
there exists $(x^0,t_0)\in \Omega\times(t_1,t_2]$ such that
$$u(x^0,t_0)=\min_{\Omega\times (t_1,t_2]}u(x,t)<0.$$
Combining the exterior condition with the initial condition in \eqref{MP1}, we directly calculate
\begin{eqnarray*}
% \nonumber to remove numbering (before each equation)
    &&\partial_t^\alpha u(x^0,t_0)+(-\Delta)^su(x^0,t_0) \\
   &=&C_{\alpha}\int_{-\infty}^{t_0} \frac{u(x^0,t_0)-u(x^0,\tau)}{(t_0-\tau)^{1+\alpha}}\operatorname{d}\!\tau+C_{n,s} P.V. \int_{\mathbb R^n} \frac{u(x^0,t_0)-u(y,t_0)}{|x^0-y|^{n+2s}}\operatorname{d}\!y\\
    &=&C_{\alpha}\int_{-\infty}^{t_1} \frac{u(x^0,t_0)-u(x^0,\tau)}{(t_0-\tau)^{1+\alpha}}\operatorname{d}\!\tau+C_{\alpha}\int_{t_1}^{t_0} \frac{u(x^0,t_0)-u(x^0,\tau)}{(t_0-\tau)^{1+\alpha}}\operatorname{d}\!\tau\\
    &&+C_{n,s} P.V. \int_{\Omega} \frac{u(x^0,t_0)-u(y,t_0)}{|x^0-y|^{n+2s}}\operatorname{d}\!y+C_{n,s} \int_{\Omega^c} \frac{u(x^0,t_0)-u(y,t_0)}{|x^0-y|^{n+2s}}\operatorname{d}\!y\\
    &<&0,
\end{eqnarray*}
which contradicts the differential inequality in \eqref{MP1}. Hence, we verify Theorem \ref{MP}.
\end{proof}

\begin{theorem}\label{MPA} {\rm(Maximum principle for antisymmetric functions in bounded domains)} Let $\Omega\subset\Sigma_\lambda$ be a bounded domain and $[t_1,t_2]\subset\mathbb{R}$ be a finite interval.
Suppose that $$w(x,t)\in(C^{1,1}_{\rm loc}(\Omega) \cap {\mathcal L}_{2s}(\mathbb{R}^n)) \times (C^1([t_1,t_2])\cap {\mathcal L}^{-}_{\alpha}(\mathbb{R}))$$ is lower semi-continuous in $x$ on $\overline{\Omega}$, satisfying
\begin{equation}\label{MPA1}
\left\{
\begin{array}{ll}
    \partial_t^\alpha w(x,t)+(-\Delta)^sw(x,t)\geq 0 ,~   &(x,t) \in  \Omega\times(t_1,t_2]  , \\
  w(x,t)\geq 0 , ~ &(x,t)  \in (\Sigma_{\lambda}\setminus\Omega)\times(t_1,t_2] ,\\
  w(x,t)\geq 0 , ~ &(x,t)  \in \Omega\times(-\infty,t_1] ,\\
   w(x,t)=- w(x^\lambda,t), &(x,t)     \in \Sigma_\lambda\times\mathbb{R} ,
\end{array}
\right.
\end{equation}
then $w(x,t)\geq 0$ in $\Omega\times(t_1,t_2]$.
\end{theorem}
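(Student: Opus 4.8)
The plan is to argue by contradiction exactly as in the bounded-domain maximum principle of Theorem~\ref{MP}, with the only new feature being the antisymmetry that must be exploited to control the contribution of the fractional Laplacian from $\Sigma_\lambda^c$. First I would suppose that $w(x,t)<0$ somewhere in $\Omega\times(t_1,t_2]$; since $\Omega$ is bounded and $[t_1,t_2]$ is a finite interval, the lower semicontinuity of $w$ in $x$ together with the continuity in $t$ guarantees that the infimum is attained, say at a point $(x^0,t_0)\in\overline\Omega\times(t_1,t_2]$. By the exterior condition $w\geq0$ on $(\Sigma_\lambda\setminus\Omega)\times(t_1,t_2]$ and the fact (from antisymmetry and $w\geq0$ on $\Sigma_\lambda$... no — rather from the third and fourth lines of \eqref{MPA1}) that the negative minimum cannot be on the boundary, one gets $x^0\in\Omega$ and $w(x^0,t_0)=\min_{\Sigma_\lambda\times(t_1,t_2]}w<0$ with the minimum over $\Sigma_\lambda$ as well.

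Next I would evaluate the two nonlocal operators at $(x^0,t_0)$. For the time part, split $\int_{-\infty}^{t_0}=\int_{-\infty}^{t_1}+\int_{t_1}^{t_0}$; on $(t_1,t_0]$ we have $w(x^0,t_0)-w(x^0,\tau)\leq0$ because $(x^0,t_0)$ is the minimum over $\Omega\times(t_1,t_2]$, and on $(-\infty,t_1]$ we have $w(x^0,\tau)\geq0$ by the initial condition while $w(x^0,t_0)<0$, so that integrand is also negative; hence $\partial_t^\alpha w(x^0,t_0)\leq0$, and in fact strictly $<0$ unless $w(x^0,\cdot)$ is constant on the past. For the spatial part, use the antisymmetry to fold the integral over $\Sigma_\lambda^c$ back to $\Sigma_\lambda$: writing
\[
(-\Delta)^s w(x^0,t_0)=C_{n,s}\,\mathrm{P.V.}\!\int_{\Sigma_\lambda} w(y,t_0)\Bigl(\frac{1}{|x^0-y^\lambda|^{n+2s}}-\frac{1}{|x^0-y|^{n+2s}}\Bigr)\operatorname{d}\!y+C_{n,s}w(x^0,t_0)\!\int_{\Sigma_\lambda}\Bigl(\frac{1}{|x^0-y|^{n+2s}}-\frac{1}{|x^0-y^\lambda|^{n+2s}}\Bigr)\operatorname{d}\!y,
\]
where the second term is $\le 0$ since $w(x^0,t_0)<0$ and $|x^0-y|<|x^0-y^\lambda|$ for $x^0,y\in\Sigma_\lambda$. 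For the first term I would split $\Sigma_\lambda$ according to the sign of $w(y,t_0)$: where $w(y,t_0)\ge 0$ the kernel difference is negative, giving a nonpositive contribution, and where $w(y,t_0)<0$ we bound it below by $w(x^0,t_0)$ and again the kernel difference is negative; in all cases the spatial operator is $\le \tfrac{C_{n,s}}{2}w(x^0,t_0)\int_{\Sigma_\lambda}(\cdots)<0$. Altogether $\partial_t^\alpha w(x^0,t_0)+(-\Delta)^s w(x^0,t_0)<0$, contradicting the differential inequality in \eqref{MPA1}.

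The only genuinely delicate point is the strictness: one needs a strict inequality in $\partial_t^\alpha w+(-\Delta)^s w$ at the minimum point, and this requires ruling out the degenerate case in which \emph{both} nonlocal operators vanish, i.e. $w(x^0,\cdot)\equiv w(x^0,t_0)$ on $(-\infty,t_0]$ and $w(\cdot,t_0)\equiv w(x^0,t_0)$ on $\Sigma_\lambda$. But if $w(\cdot,t_0)$ were the negative constant $w(x^0,t_0)$ on all of $\Sigma_\lambda$, antisymmetry forces $w\equiv 0$ on $T_\lambda$, contradicting constancy at a nonzero value; so the spatial operator is automatically strictly negative whenever $w(x^0,t_0)<0$. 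I therefore expect the main (very mild) obstacle to be just organizing this case analysis cleanly — the bounded-domain setting means no auxiliary function is needed, so the argument is essentially a transcription of the proof of Theorem~\ref{MP} with the antisymmetric folding of the spatial kernel inserted, and one concludes $w\ge 0$ in $\Omega\times(t_1,t_2]$.
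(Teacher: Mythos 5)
Your overall strategy matches the paper's: evaluate $\partial_t^\alpha w+(-\Delta)^s w$ at a negative minimum $(x^0,t_0)$ of $w$ over $\Sigma_\lambda\times(t_1,t_2]$, split the time integral at $t_1$ and use the initial condition, and fold the spatial integral over $\Sigma_\lambda^c$ back onto $\Sigma_\lambda$ by antisymmetry. The problem is that your displayed decomposition of $(-\Delta)^s w(x^0,t_0)$ has a sign error that then propagates. Substituting $y\mapsto y^\lambda$ in the integral over $\Sigma_\lambda^c$ and using $w(y^\lambda,t_0)=-w(y,t_0)$ gives
\begin{align*}
(-\Delta)^s w(x^0,t_0)=C_{n,s}\,\PV\int_{\Sigma_\lambda}\Bigl[\frac{w(x^0,t_0)-w(y,t_0)}{|x^0-y|^{n+2s}}+\frac{w(x^0,t_0)+w(y,t_0)}{|x^0-y^\lambda|^{n+2s}}\Bigr]\operatorname{d}\!y ,
\end{align*}
so the coefficient of $w(x^0,t_0)$ is the \emph{sum} $|x^0-y|^{-n-2s}+|x^0-y^\lambda|^{-n-2s}$, not the difference you wrote; moreover, with the correct sum the two pieces of your splitting are separately divergent near $y=x^0$, so the terms must be kept grouped. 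Because of the wrong sign, your subsequent case analysis does not close: on the set where $w(y,t_0)<0$, replacing $w(y,t_0)$ by the lower bound $w(x^0,t_0)$ and multiplying by the \emph{negative} kernel difference produces a \emph{positive} upper bound for that contribution, so the asserted inequality $(-\Delta)^s w(x^0,t_0)\le \tfrac{C_{n,s}}{2}w(x^0,t_0)\int_{\Sigma_\lambda}(\cdots)$ does not follow from the estimates you listed.

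The standard (and the paper's) way to finish is to use $w(x^0,t_0)-w(y,t_0)\le 0$ together with $|x^0-y|<|x^0-y^\lambda|$ to replace $|x^0-y|^{-n-2s}$ by $|x^0-y^\lambda|^{-n-2s}$ in the first fraction above, which yields
\begin{align*}
(-\Delta)^s w(x^0,t_0)\le 2C_{n,s}\,w(x^0,t_0)\int_{\Sigma_\lambda}\frac{\operatorname{d}\!y}{|x^0-y^\lambda|^{n+2s}}<0,
\end{align*}
the last integral being finite and positive since $x^0\in\Sigma_\lambda$. This also shows that your closing paragraph about a ``degenerate case'' is unnecessary: this single term is already strictly negative whenever $w(x^0,t_0)<0$, the time part is $\le 0$, and the contradiction with the differential inequality is immediate. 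With the sign corrected and the terms regrouped in this way, your argument coincides with the paper's proof.
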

\begin{proof}[\bf Proof.]
We argue by contradiction. If the conclusion is violated, then there exists $(x^0,t_0)\in \Omega\times(t_1,t_2]$ such that
$$w(x^0,t_0)=\min_{\Sigma_\lambda\times (t_1,t_2]}w(x,t)<0.$$
Now a combination of the exterior condition, the initial condition with the antisymmetry of $w(x,t)$ in $x$ yields that
\begin{eqnarray*}
% \nonumber to remove numbering (before each equation)
    &&\partial_t^\alpha w(x^0,t_0)+(-\Delta)^sw(x^0,t_0) \\
   &=&C_{\alpha}\int_{-\infty}^{t_0} \frac{w(x^0,t_0)-w(x^0,\tau)}{(t_0-\tau)^{1+\alpha}}\operatorname{d}\!\tau+C_{n,s} P.V. \int_{\mathbb R^n} \frac{w(x^0,t_0)-w(y,t_0)}{|x^0-y|^{n+2s}}\operatorname{d}\!y\\
    &=&C_{\alpha}\int_{-\infty}^{t_1} \frac{w(x^0,t_0)-w(x^0,\tau)}{(t_0-\tau)^{1+\alpha}}\operatorname{d}\!\tau+C_{\alpha}\int_{t_1}^{t_0} \frac{w(x^0,t_0)-w(x^0,\tau)}{(t_0-\tau)^{1+\alpha}}\operatorname{d}\!\tau\\
    &&+C_{n,s} P.V. \int_{\Sigma_\lambda} \frac{w(x^0,t_0)-w(y,t_0)}{|x^0-y|^{n+2s}}\operatorname{d}\!y+C_{n,s} \int_{\Sigma_\lambda^c} \frac{w(x^0,t_0)-w(y,t_0)}{|x^0-y|^{n+2s}}\operatorname{d}\!y\\
    &<&C_{n,s} \int_{\Sigma_\lambda} \frac{w(x^0,t_0)-w(y,t_0)}{|x^0-y^\lambda|^{n+2s}}\operatorname{d}\!y+C_{n,s} \int_{\Sigma_\lambda} \frac{w(x^0,t_0)+w(y,t_0)}{|x^0-y^\lambda|^{n+2s}}\operatorname{d}\!y\\
    &=&2C_{n,s} w(x^0,t_0)\int_{\Sigma_\lambda} \frac{1}{|x^0-y^\lambda|^{n+2s}}\operatorname{d}\!y<0,
\end{eqnarray*}
which is guaranteed by the radial decrease of the kernel resulting from $|x^0-y|<|x^0-y^\lambda|$ for $y\in \Sigma_\lambda$. Obviously, the aforementioned estimate contradicts the differential inequality in \eqref{MPA1}. This completes the proof of Theorem \ref{MPA}.
\end{proof}

\section{Averaging effects}\label{3}
In this section, we develop two averaging effects ( Theorem \ref{AE} and Theorem \ref{AEA} ) of the dual nonlocal operator $\partial_t^\alpha+(-\Delta)^s$, which
 will play crucial roles in the proof of our main results and will
become powerful tools in studying fractional problems, particularly in the cases where the solutions are unbounded. We start by presenting the proof of Theorem \ref{AE}\,.

\begin{proof}[\bf Proof of Theorem \ref{AE}\,.]
The lower bound estimate will be obtained by constructing a sub-solution. Denote
\begin{equation*}
\psi(x,t) := \phi(x)\eta(t):= C\left(1-\left|\frac{x-x^0}{r}\right|^{2}\right)_{+}^{s}\eta(t),
\end{equation*}
where $\eta(t)$ is a smooth cut-off function whose compact support  is contained in $(t_0-r^{\frac{2s}{\alpha}},t_0+r^{\frac{2s}{\alpha}})$, satisfying
$$0\leq\eta(t)\leq1, \,\,\mbox{and}\,\,\eta(t)\equiv1\,\, \mbox{in}\,\, [t_0-\frac{r^{\frac{2s}{\alpha}}}{2},t_0+\frac{r^{\frac{2s}{\alpha}}}{2}].$$
In fact, it is well known that
\begin{equation}\label{AE2}
(-\Delta)^s\phi(x)=\frac{1}{r^{2s}}\,\,\mbox{in}\,\, B_r(x^0)
\end{equation}
for a proper choice of positive constant $C$, and $\phi(x)\equiv 0$ in $B^c_r(x^0)$.
Let
$$\underline{u}(x,t):=u(x,t)\chi_D(x)+\delta
\psi(x,t),$$
where $\delta$ is a positive constant to be determined later, and $\chi_D$ is the characteristic function on $D$, more precisely, $\chi_D(x)\equiv1$ for $x\in D$ and $\chi_D(x)\equiv0$ for $x\notin D$.

Next, we attempt to claim that $\underline{u}(x,t)$ is
a sub-solution of $u(x,t)$ in $B_r(x^0)\times(t_0-r^{\frac{2s}{\alpha}},t_0+r^{\frac{2s}{\alpha}}]$.
For $(x,t)\in B_r(x^0)\times(t_0-r^{\frac{2s}{\alpha}},t_0+r^{\frac{2s}{\alpha}}]$, combining \eqref{AE1-1}, \eqref{AE1}, \eqref{AE2} and Corollary \ref{coro1} with $B_r(x^0)\cap \overline{D}=\varnothing$,
we directly calculate
\begin{eqnarray*}
% \nonumber to remove numbering (before each equation)
  && \partial_t^\alpha \left(u(x,t)-\underline{u}(x,t)\right)+(-\Delta)^s\left(u(x,t)-\underline{u}(x,t)\right) \\
   &\geq& -\varepsilon -\delta\phi(x)\partial_t^\alpha \eta(t)-(-\Delta)^s\left(u(x,t)\chi_D(x)\right)-\delta\eta(t)(-\Delta)^s\phi(x)\\
   &\geq&-\varepsilon+C_{n,s}C_0\int_{D}\frac{1}{|x-y|^{n+2s}}\operatorname{d}\!y-\frac{C\delta}{r^{2s}}\\
   &\geq&-\varepsilon+C_2-\frac{C\delta}{r^{2s}}.
\end{eqnarray*}
Now choosing $\epsilon=\frac{C_2}{2}$ and $\delta =\frac{C_2r^{2s}}{2C}$, we deduce that
\begin{equation*}
 \partial_t^\alpha \left(u(x,t)-\underline{u}(x,t)\right)+(-\Delta)^s\left(u(x,t)-\underline{u}(x,t)\right)\geq0,\quad (x,t)\in B_r(x^0)\times(t_0-r^{\frac{2s}{\alpha}},t_0+r^{\frac{2s}{\alpha}}].
\end{equation*}
For $(x,t)\in  B^c_r(x^0)\times(t_0-r^{\frac{2s}{\alpha}},t_0+r^{\frac{2s}{\alpha}}]$, we have
$$u(x,t)-\underline{u}(x,t)=u(x,t)-u(x,t)\chi_D(x)\geq0,$$
which is ensured by the definition of $\psi(x,t)$ and the exterior condition in \eqref{AE1}.
While if $(x,t)\in  B_r(x^0)\times(-\infty,t_0-r^{\frac{2s}{\alpha}}]$, then we apply the initial condition in \eqref{AE1} to derive
$$u(x,t)-\underline{u}(x,t)=u(x,t)\geq0.$$
In summary, we have obtained
\begin{equation*}
\begin{cases}
\partial_t^\alpha \left(u(x,t)-\underline{u}(x,t)\right)+(-\Delta)^s\left(u(x,t)-\underline{u}(x,t)\right)\geq0, & \mbox{in} \,\, B_r(x^0)\times(t_0-r^{\frac{2s}{\alpha}},t_0+r^{\frac{2s}{\alpha}}], \\
u(x,t)-\underline{u}(x,t)\geq 0, & \mbox{in} \,\, B^c_r(x^0)\times(t_0-r^{\frac{2s}{\alpha}},t_0+r^{\frac{2s}{\alpha}}],\\
u(x,t)-\underline{u}(x,t)\geq 0, & \mbox{in} \,\, B_r(x^0)\times(-\infty,t_0-r^{\frac{2s}{\alpha}}].\\
\end{cases}
\end{equation*}
Then the maximum principle established in Theorem \ref{MP} implies that
\begin{equation*}
u(x,t)\geq\underline{u}(x,t) \,\,\mbox{for}\,\, (x,t)\in B_r(x^0)\times(t_0-r^{\frac{2s}{\alpha}},t_0+r^{\frac{2s}{\alpha}}].
\end{equation*}
Hence, we verify that $\underline{u}(x,t)$ is
a sub-solution of $u(x,t)$ in $B_r(x^0)\times(t_0-r^{\frac{2s}{\alpha}},t_0+r^{\frac{2s}{\alpha}})$. Finally,
it follows that
\begin{equation*}
 u(x^0,t_0)\geq \underline{u}(x^0,t_0)=\delta
\phi(x^0)\eta(t_0)=C\delta=:C_1>0.
\end{equation*}
Now the proof of Theorem \ref{AE} is completed.
\end{proof}

Now we turn our attention to the proof of Theorem \ref{AEA} regarding the averaging effects for the antisymmetric functions.

\begin{proof}[\bf Proof of Theorem \ref{AEA}\,.] The primary objective of this proof is to
construct an antisymmetric sub-solution of $w(x,t)$. Let
\begin{equation*}
 \phi(x):=\left(1-\left|\frac{x-x^0}{r}\right|^{2}\right)_{+}^{s} \,\,\mbox{and}\,\, \phi_\lambda(x):=\left(1-\left|\frac{x^\lambda-x^0}{r}\right|^{2}\right)_{+}^{s},
\end{equation*}
then it is obvious that
\begin{equation*}
  \Phi(x):=\phi(x)-\phi_\lambda(x)
\end{equation*}
is an antisymmetric function with respect to the plane $T_\lambda$.
We denote
$$\eta(t)\in C_0^\infty((t_0-r^{\frac{2s}{\alpha}},t_0+r^{\frac{2s}{\alpha}}))$$ which is a smooth cut-off function whose value belongs to $[0,1]$, satisfying
$$\eta(t)\equiv1\,\,\mbox{in}\,\, [t_0-\frac{r^{\frac{2s}{\alpha}}}{2},t_0+\frac{r^{\frac{2s}{\alpha}}}{2}].$$
Let $D^\lambda$ be the reflection domain of $D$ with respect to the plane $T_{\lambda}$ as illustrated in Figure 3 below, and
$$\underline{w}(x,t)=w(x,t)\chi_{D\cup D^\lambda}(x)+\delta\Phi(x)\eta(t),$$
where $\delta$ is a positive constant to be determined later,
and $\chi_{D\cup D^\lambda}(x)$ is the characteristic function in the region $D\cup D^\lambda$.

\begin{center}
\begin{tikzpicture}[node distance = 0.3cm]
\draw (-2,1)[thick] ellipse  [x radius=1.5cm, y radius=0.8cm];
\fill[pink!50] (-2,1) ellipse  [x radius=1.5cm, y radius=0.8cm];
\draw (2,1)[thick] ellipse  [x radius=1.5cm, y radius=0.8cm];
\fill[pink!50] (2,1) ellipse  [x radius=1.5cm, y radius=0.8cm];
\path (-2,1.3) [blue][semithick] node [ font=\fontsize{10}{10}\selectfont] {$w\geqslant C_0>0$};
\path (-3,2.4) node [ font=\fontsize{12}{12}\selectfont] {$\Sigma_\lambda$};
\path (-2,0.8) node {$D$};
\path (2,0.8) node {$D^\lambda$};
\draw (-1.1,2.5) circle (0.5);
\path (-1.1,2.5) [very thin,fill=black]  circle(1 pt) node at (-0.9, 2.35) [font=\fontsize{10}{10}\selectfont]{$ x^0$};
\draw (1.1,2.5) circle (0.5);
\path (1.1,2.5) [very thin,fill=black]  circle(1 pt) node at (1.15, 2.25) [font=\fontsize{9}{9}\selectfont]{$ (x^0)^\lambda$};
\draw [thick] [dashed] [red] (-1.1,2.5)--(-1.5,2.2) node at (-1.3, 2.5) [font=\fontsize{10}{10}\selectfont] [red] {$r$};
\draw [thick] [dashed] [red] (1.1,2.5)--(1.6,2.7) node at (1.3, 2.7) [font=\fontsize{10}{10}\selectfont] [red] {$r$};
\draw  [->,semithick](-4,-0.2)--(4,-0.2) node [anchor=north west] {$x_1$};
\draw [semithick] (0,-1.1) -- (0,3.5);
\path (0.25,-0.75) node  [ font=\fontsize{10}{10}\selectfont] {$T_{\lambda}$};
\node [below=1cm, align=flush center,text width=16cm] at (0,-0.5)
        {Figure 3. The positions of $B_r(x^0)$, $B_r((x^0)^{\lambda})$, $D$ and $D^\lambda$. };
\end{tikzpicture}
\end{center}

In the sequel, we aim to show that the antisymmetric function $\underline{w}(x,t)$ is
a sub-solution of $w(x,t)$ in $B_r(x^0)\times(t_0-r^{\frac{2s}{\alpha}},t_0+r^{\frac{2s}{\alpha}}]$.
For $(x,t)\in B_r(x^0)\times(t_0-r^{\frac{2s}{\alpha}},t_0+r^{\frac{2s}{\alpha}}]$, combining \eqref{AEA1} with \eqref{AEA2}, \eqref{AE2}, Corollary \ref{coro1} and $r\leq \frac{\operatorname{dist}(x^0,T_\lambda)}{2}$, we further select $\epsilon=\frac{C_2}{2}$ and $\delta =\frac{C_2r^{2s}}{2C}$ to derive
\begin{eqnarray*}
% \nonumber to remove numbering (before each equation)
  && \partial_t^\alpha \left(w(x,t)-\underline{w}(x,t)\right)+(-\Delta)^s\left(w(x,t)-\underline{w}(x,t)\right) \\
   &\geq& -\varepsilon -\delta\Phi(x)\partial_t^\alpha \eta(t)-(-\Delta)^s\left(w(x,t)\chi_{D\cup D^\lambda}(x)\right)-\delta\eta(t)(-\Delta)^s\left(\phi(x)-\phi_\lambda(x)\right)\\
   &\geq&-\varepsilon+C_{n,s}\int_{D\cup D^\lambda}\frac{w(y,t)}{|x-y|^{n+2s}}\operatorname{d}\!y-\frac{C\delta}{r^{2s}}-C_{n,s}\delta\int_{B_r((x^0)^\lambda)}\frac{1}{|x-y|^{n+2s}}\operatorname{d}\!y\\
   &\geq&-\varepsilon+C_{n,s}C_0\int_{D}\left[\frac{1}{|x-y|^{n+2s}}-\frac{1}{|x-y^\lambda|^{n+2s}}\right]\operatorname{d}\!y-\frac{C\delta}{r^{2s}}\\
   &\geq&-\varepsilon+C_2-\frac{C\delta}{r^{2s}}\geq 0.
\end{eqnarray*}
For $(x,t)\in  (\Sigma_\lambda\setminus B_r(x^0))\times(t_0-r^{\frac{2s}{\alpha}},t_0+r^{\frac{2s}{\alpha}}]$,  it follows from the exterior condition in \eqref{AEA2} that
$$w(x,t)-\underline{w}(x,t)=w(x,t)-w(x,t)\chi_{D\cup D^\lambda}(x)\geq0.$$
Finally, taking into account $(x,t)\in  B_r(x^0)\times(-\infty,t_0-r^{\frac{2s}{\alpha}}]$, we apply the initial condition in \eqref{AEA2} to derive
$$w(x,t)-\underline{w}(x,t)=w(x,t)\geq0.$$
Through the above arguments, we have deduced that
\begin{equation*}
\begin{cases}
\partial_t^\alpha \left(w(x,t)-\underline{w}(x,t)\right)+(-\Delta)^s\left(w(x,t)-\underline{w}(x,t)\right)\geq0, & \mbox{in} \,\, B_r(x^0)\times(t_0-r^{\frac{2s}{\alpha}},t_0+r^{\frac{2s}{\alpha}}], \\
w(x,t)-\underline{w}(x,t)\geq 0, & \mbox{in} \,\, (\Sigma_\lambda\setminus B_r(x^0))\times(t_0-r^{\frac{2s}{\alpha}},t_0+r^{\frac{2s}{\alpha}}],\\
w(x,t)-\underline{w}(x,t)\geq 0, & \mbox{in} \,\, B_r(x^0)\times(-\infty,t_0-r^{\frac{2s}{\alpha}}],\\
w(x,t)-\underline{w}(x,t)=-\left(w(x^\lambda,t)-\underline{w}(x^\lambda,t)\right), & \mbox{in} \,\, \Sigma_\lambda\times\mathbb{R}.
\end{cases}
\end{equation*}
Then by virtue of the maximum principle for antisymmetric functions established in Theorem \ref{MPA}\,, we obtain
\begin{equation*}
w(x,t)\geq\underline{w}(x,t) \,\,\mbox{in}\,\,  B_r(x^0)\times(t_0-r^{\frac{2s}{\alpha}},t_0+r^{\frac{2s}{\alpha}}].
\end{equation*}
Hence, we conclude that $\underline{w}(x,t)$ is
a sub-solution of $w(x,t)$ in $B_r(x^0)\times(t_0-r^{\frac{2s}{\alpha}},t_0+r^{\frac{2s}{\alpha}}]$. As a consequence, we have
\begin{equation*}
 w(x^0,t_0)\geq \underline{w}(x^0,t_0)=\delta
\phi(x^0)\eta(t_0)=\delta=:C_1>0.
\end{equation*}
This completes the proof of Theorem \ref{AEA}.
\end{proof}

\section{Monotonicity in a half space}\label{4}
Based on the maximum principles and the averaging effects introduced in the previous section, we establish the monotonicity of positive solutions for nonlocal parabolic problem \eqref{model} in a half space by virtue of the direct method of moving planes.
For readers' convenience, we first provide a sketch of proof.

We proceed in three steps.

In the first step, we argue that for $\lambda > 0$ sufficiently close to $0$, i.e. when $\Omega_\lambda$ is a narrow region, it holds
\begin{equation}
 w_\lambda(x,t)  \geq 0,  \, \,\mbox{in}\,\, \Omega_\lambda\times\mathbb{R}.
 \label{A121}
 \end{equation}
This provides a starting point for moving the plane $T_\lambda$ to the right along the $x_{1}$-axis.

In the next step, we move the plane $T_\lambda$ continuously to the right along the $x_{1}$-axis as long as inequality (\ref{A121}) is valid to its limiting position.
We want to show that the plane can be moved all the way to positive infinity.  Otherwise,
we will apply the maximum principle for antisymmetric functions established in Theorem \ref{MPAF} and subtly employ the averaging effects acquired in Section \ref{3} twice to derive a contradiction along a sequence of approximate minimum points.

In the final step, we verify that $u(x,t)$ is strictly increasing with respect to $x_{1}$ in $\mathbb{R}^n_+$ for any $t\in\mathbb{R}$.
To this end, it suffice to derive  a strong maximum principle based on \eqref{A121} for the dual nonlocal operator $\partial_t^\alpha+(-\Delta)^s$ to arrive at
\begin{equation*}
  w_{\lambda}(x,t)>0, \,\,\mbox{in}\,\,\Omega_\lambda\times\mathbb{R}\,\, \mbox{for any}\,\, \lambda>0.
\end{equation*}

Now we show the details.

\begin{proof}[\bf Proof of Theorem \ref{MainR}\,.] By a direct calculation, we have
\begin{equation}\label{MR2}
\left\{
\begin{array}{ll}
    \partial_t^\alpha w_\lambda(x,t)+(-\Delta)^sw_\lambda(x,t)=C_\lambda(x,t)w_\lambda(x,t),~   &(x,t) \in  \Omega_\lambda\times\mathbb{R}  , \\
  w_\lambda(x,t)\geq 0 , ~ &(x,t)  \in (\Sigma_\lambda\backslash\Omega_\lambda)\times\mathbb{R},\\
   w_\lambda(x,t)=- w_\lambda(x^\lambda,t), ~ &(x,t)  \in \Sigma_\lambda\times\mathbb{R},\\
\end{array}
\right.
\end{equation}
where the coefficient function
\begin{equation*}
  C_\lambda(x,t)=\int_0^1 f'\left(\varsigma u_\lambda(x,t)+(1-\varsigma)u(x,t)\right)\operatorname{d}\!\varsigma\leq C,
\end{equation*}
which is ensured by $f'$ is bounded
from above. In order to prove the strict monotonicity of $u(x,t)$ with respect to $x_1$, it suffices to show that the antisymmetric
function $w_\lambda(x,t)>0$ in $\Omega_\lambda\times\mathbb{R}$ for any $\lambda>0$. Now we divide the proof into three steps.

\noindent \textup{\textbf{Step 1.}} Start moving the plane $T_{\lambda}$ from $x_1=0$ to the right along the $x_{1}$-axis. If the positive $\lambda$ is small enough, then the assumptions in Theorem \ref{MainR} guarantee that we can apply Theorem \ref{NRP} to the problem \eqref{MR2} to derive
\begin{equation}\label{MR3}
  w_{\lambda}(x,t)\geq0 \,\, \mbox{in}\,\, \Omega_{\lambda}\times\mathbb{R},
\end{equation}
where $\Omega_\lambda$ is a narrow region for sufficiently small $\lambda>0$.
Inequality \eqref{MR3} provides a starting point to move the plane $T_\lambda$.

\noindent \textup{\textbf{Step 2.}} In this step, we continue to move the plane $T_\lambda$ to the right along the $x_{1}$-axis as long as \eqref{MR3} is valid to its limiting position. Let
 \begin{equation*}
 \lambda_0 := \sup \left\{ \lambda \mid w_\mu (x,t) \geq 0,\,\, (x,t) \in \Sigma_\mu\times\mathbb{R}\,\,\mbox{for any} \,\, \mu \leq \lambda \right\},
 \end{equation*}
We will show that
\begin{equation}\label{MR4}
 \lambda_0=+\infty.
\end{equation}
Otherwise, if $0<\lambda_0<+\infty$, then by its definition,  there exists a sequence of $\lambda_k>\lambda_0$ such that $\lambda_k\rightarrow \lambda_0$ as $k\rightarrow\infty$, along which
\begin{equation*}
  \Sigma_{\lambda_k}^-\times\mathbb{R}:=\{(x,t)\in \Sigma_{\lambda_k}\times\mathbb{R}\mid w_{\lambda_k}(x,t)<0\}
\end{equation*}
is nonempty and $\displaystyle\inf_{\Sigma_{\lambda_k}\times\mathbb{R}}w_{\lambda_k}(x,t)<0$. We first show that
\begin{equation}\label{MR4-3}
  \inf_{\Sigma_{\lambda_k}\times\mathbb{R}}w_{\lambda_k}(x,t)\rightarrow 0\,\,\mbox{as}\,\,k\rightarrow\infty.
\end{equation}
If not, then there exists a uniformly positive constant $M$ such that $$\displaystyle\inf_{\Sigma_{\lambda_k}\times\mathbb{R}}w_{\lambda_k}(x,t)<-M<0.$$ From this, there exists a sequence $\{(x^k,t_k)\}\subset\Sigma_{\lambda_k}\times\mathbb{R}$ such that
\begin{equation}\label{MR4-1}
w_{\lambda_k}(x^k,t_k)\leq-M<0.
\end{equation}
If $x^k$ is between $T_{\lambda_0}$ and $T_{\lambda_k}$, then by virtue of $\lambda_k\rightarrow \lambda_0$ as $k\rightarrow\infty$, we have $|x^k-(x^k)^{\lambda_k}|\rightarrow 0$ as $k\rightarrow\infty$. By $u(x,t)$ is uniformly continuous with respect to $x$, we derive
\begin{equation*}
  w_{\lambda_k}(x^k,t_k)=u((x^k)^{\lambda_k},t_k)-u(x^k,t_k)\rightarrow 0\,\,\mbox{as}\,\, k\rightarrow\infty,
\end{equation*}
which contradicts with \eqref{MR4-1}.
In the other case, if $x^k\in \Omega_{\lambda_0}$, then combining the uniform continuity of $u(x,t)$ in $x$ with $\lambda_k\rightarrow \lambda_0$ as $k\rightarrow\infty$ again, we have
\begin{equation*}
% \nonumber to remove numbering (before each equation)
  w_{\lambda_k}(x^k,t_k)-w_{\lambda_0}(x^k,t_k)=u((x^k)^{\lambda_k},t_k)-u((x^k)^{\lambda_0},t_k)\rightarrow 0\,\,\mbox{as}\,\, k\rightarrow\infty.
\end{equation*}
While in terms of \eqref{MR4-1} and $w_{\lambda_0}(x^k,t_k)\geq 0$, we must derive a contradiction that $$w_{\lambda_k}(x^k,t_k)-w_{\lambda_0}(x^k,t_k)\leq-M<0.$$
Hence, we deduce that
\begin{equation}\label{MR4-2}
  \inf_{\Sigma_{\lambda_k}\times\mathbb{R}}w_{\lambda_k}(x,t)=:-m_k\rightarrow0\,\,\mbox{as}\,\, k\rightarrow\infty.
\end{equation}

In the sequel, we denote the sequence $$q_k:=\displaystyle\sup_{\Sigma_{\lambda_k}^-\times\mathbb{R}}C_{\lambda_k}(x,t),$$
there are the following two possibilities.

\noindent \textup{\textbf{Case 1.}} If $q_k\leq \varepsilon_k\rightarrow 0$ as $k\rightarrow\infty$, then it infers that $C_{\lambda_k}(x,t)\leq C$ in $\Sigma_{\lambda_k}^-\times\mathbb{R}$ for sufficiently large $k$ and for any given small positive constant $C$.
Hence, by virtue of Theorem \ref{MPAF} to the problem \eqref{MR2} with $\lambda=\lambda_k$, we conclude that $$w_{\lambda_k}(x,t)\geq 0\,\, \mbox{in} \,\,\Sigma_{\lambda_k}\times\mathbb{R}$$ for
sufficiently large $k$, which is a contradiction with the definition of $\lambda_k$.

\noindent \textup{\textbf{Case 2.}} If positive sequence $q_k\nrightarrow 0$ as $k\rightarrow\infty$, then there exist a positive constant $\delta_0$ and a subsequence of $\{q_k\}$ (still denoted by $\{q_k\}$) such that $q_k\geq\delta_0>0$.
In this case, by using the condition $f'(0)\leq0$ and \eqref{MR4-2}, we can deduce that there exist a positive constant $\varepsilon_0$ and a sequence $\{(x^k,t_k)\}\subset\Sigma^-_{\lambda_k}\times\mathbb{R}$ such that $$u(x^k,t_k)\geq\varepsilon_0>0$$ and
\begin{equation*}
  w_{\lambda_k}(x^k,t_k)=-m_k+m_k^2<0.
\end{equation*}
Then a combination of $u(x,t)=0$ in $(\mathbb{R}^n\setminus\mathbb{R}^n_+)\times\mathbb{R}$ and the continuity of $u(x,t)$ yields that there exists a small radius $r_0>0$ independent of $k$, such that
\begin{equation}\label{MR5}
  u(x,t)\geq \frac{\varepsilon_0}{2}>0 \,\,\mbox{in}\,\, B_{r_0}(x^k)\times(t_k-r_0^{\frac{2s}{\alpha}},t_k+r_0^{\frac{2s}{\alpha}}]\subset\mathbb{R}^n_+\times \mathbb{R}.
\end{equation}

Now we claim that  $\delta_k:=\operatorname{dist}(x^k,T_{\lambda_k})=\lambda_k-x_1^k$ is bounded away from zero for sufficiently large $k$. If not, then $\delta_k\rightarrow 0$ as $k\rightarrow\infty$. Let
$$v_k(x,t):=w_{\lambda_k}(x,t)-m_k^2\eta_k(x,t),$$
where the sequence of smooth cut-off functions
$$\eta_k(x,t):=\eta(\frac{x-x^k}{\delta_k},\frac{t-t_k}{\delta_k^{\frac{2s}{\alpha}}})\in C_0^\infty\left(B_{\delta_k}(x^k)\times (t_k-\delta_k^{\frac{2s}{\alpha}},t_k+\delta_k^{\frac{2s}{\alpha}})\right)$$
satisfies
$$0\leq\eta_k \leq1, \,\,\mbox{and} \,\,\eta_k(x,t)\equiv1\,\, \mbox{in} \,\, B_{\frac{\delta_k}{2}}(x^k)\times (t_k-\frac{\delta_k^{\frac{2s}{\alpha}}}{2},t_k+\frac{\delta_k^{\frac{2s}{\alpha}}}{2}).$$
We denote the parabolic cylinder $$Q_{\delta_k}(x^k,t_k):=B_{\delta_k}(x^k)\times (t_k-\delta_k^{\frac{2s}{\alpha}},t_k+\delta_k^{\frac{2s}{\alpha}}),$$ thereby a direct calculation infers that
\begin{equation*}
  v_k(x^k,t_k)=w_{\lambda_k}(x^k,t_k)-m_k^2\eta_k(x^k,t_k)=-m_k+m_k^2-m_k^2=-m_k,
\end{equation*}
 and
\begin{equation*}
  v_k(x,t)=w_{\lambda_k}(x,t)\geq-m_k \,\, \mbox{for}\,\, (x,t)\in Q^c_{\delta_k}(x^k,t_k)\cap(\Sigma_{\lambda_k}\times \mathbb{R}).
\end{equation*}
Hence, there exists a point $(\bar{x}^k,\bar{t}_k)\in Q_{\delta_k}(x^k,t_k)$ such that
\begin{equation*}
-m_k-m_k^2  \leq v_k(\bar{x}^k,\bar{t}_k)=\inf_{\Sigma_{\lambda_k}\times \mathbb{R}}v_k(x,t)\leq-m_k.
\end{equation*}
Moreover, it follows from the definition of $v_k$ that $$-m_k\leq w_{\lambda_k}(\bar{x}^k,\bar{t}_k)\leq -m_k+m_k^2<0.$$
On such minimum point $(\bar{x}^k,\bar{t}_k)$ of $v_k(x,t)$, we estimate
\begin{equation*}
 \partial_t^\alpha v_k(\bar{x}^k,\bar{t}_k)=C_{\alpha}\int_{-\infty}^{\bar{t}_k} \frac{v_k(\bar{x}^k,\bar{t}_k)-v_k(\bar{x}^k,\tau)}{(\bar{t}_k-\tau)^{1+\alpha}}\operatorname{d}\!\tau\leq 0,
\end{equation*}
and
\begin{eqnarray*}
% \nonumber to remove numbering (before each equation)
  &&(-\Delta)^sv_k(\bar{x}^k,\bar{t}_k) \\
  &=&C_{n,s} P.V. \int_{\Sigma_{\lambda_k}} \frac{v_k(\bar{x}^k,\bar{t}_k)-v_k(y,\bar{t}_k)}{|\bar{x}^k-y|^{n+2s}}\operatorname{d}\!y+C_{n,s} \int_{\Sigma_{\lambda_k}^c} \frac{v_k(\bar{x}^k,\bar{t}_k)-v_k(y,\bar{t}_k)}{|\bar{x}^k-y|^{n+2s}}\operatorname{d}\!y  \\
  &\leq&  C_{n,s}\int_{\Sigma_{\lambda_k}} \frac{2v_k(\bar{x}^k,\bar{t}_k)+m_k^2\eta_k(y,\bar{t}_k)}{|\bar{x}^k-y^{\lambda_k}|^{n+2s}}\operatorname{d}\!y\\
   &\leq&\frac{Cv_k(\bar{x}^k,\bar{t}_k)}{\delta_k^{2s}}+\frac{Cm_k^2}{\delta_k^{2s}}\\
   &\leq&-\frac{Cm_k}{\delta_k^{2s}}+\frac{Cm_k^2}{\delta_k^{2s}}.
\end{eqnarray*}
While from the equation in \eqref{MR2} and Corollary \ref{coro1}\,, we derive
\begin{eqnarray*}
% \nonumber to remove numbering (before each equation)
   && \partial_t^\alpha v_k(\bar{x}^k,\bar{t}_k)+(-\Delta)^sv_k(\bar{x}^k,\bar{t}_k) \\
  &=& \partial_t^\alpha w_{\lambda_k}(\bar{x}^k,\bar{t}_k)+(-\Delta)^sw_{\lambda_k}(\bar{x}^k,\bar{t}_k)-m_k^2\partial_t^\alpha \eta_k(\bar{x}^k,\bar{t}_k)-m_k^2(-\Delta)^s\eta_k(\bar{x}^k,\bar{t}_k) \\
  &\geq&C_{\lambda_k}(\bar{x}^k,\bar{t}_k)w_{\lambda_k}(\bar{x}^k,\bar{t}_k)-\frac{Cm_k^2}{\delta_k^{2s}}\\
 &\geq&-Cm_k-\frac{Cm_k^2}{\delta_k^{2s}},
\end{eqnarray*}
where the last line is ensured by $f'$ with an upper bound. Then it follows from the above estimates that
\begin{equation*}
  -\frac{Cm_k}{\delta_k^{2s}}\geq-Cm_k-\frac{Cm_k^2}{\delta_k^{2s}}.
\end{equation*}
Multiplying both sides by $\frac{\delta_k^{2s}}{-m_k}$ and combining \eqref{MR4-2} with the assumption $\displaystyle\lim_{k\rightarrow\infty}\delta_k=0$, we obtain
\begin{equation*}
  0<C\leq C\delta_k^{2s}+Cm_k\rightarrow 0 \,\,\mbox{as}\,\, k\rightarrow\infty.
\end{equation*}
This contradiction indicates that $\delta_k$ is bounded away from zero for sufficiently large $k$. Furthermore, since $\lambda_k\rightarrow \lambda_0$ as $k\rightarrow\infty$, then we deduce that there exists a subsequence of $\{x^k,t_k\}$ (still denoted by $\{x^k,t_k\}$) such that $\{(x^k,t_k)\}\subset \Sigma_{\lambda_0}\times\mathbb{R}$ and $\operatorname{dist}\{x^k,T_{\lambda_0}\}\geq\delta_0>0$.
As a consequence of \eqref{MR5}, we further select a radius $r_1:=\min\{r_0,\delta_0\}$ such that
\begin{equation}\label{MR5-1}
  u(x,t)\geq \frac{\varepsilon_0}{2}>0 \,\,\mbox{in}\,\, B_{r_1}(x^k)\times(t_k-r_1^{\frac{2s}{\alpha}},t_k+r_1^{\frac{2s}{\alpha}}]\subset\Omega_{\lambda_0}\times \mathbb{R}.
\end{equation}

Before continuing, we illustrate the ideas of the proof based on Figure 4 below for ease of understanding. Our purpose is to show that $w_{\lambda_k}(x^k,t_k)>0$, which will contradict $(x^k,t_k)\in\Sigma^-_{\lambda_k}\times\mathbb{R}$. With this aim in mind, combining \eqref{MR5-1} with the averaging effects established in
Theorem \ref{AE}\,, we first claim that the solution $u(x,t)$ has a positive lower bound
in $ B_{r_1}(\bar{x}^k)\times(t_k-r_1^{\frac{2s}{\alpha}},t_k+r_1^{\frac{2s}{\alpha}}]$.
We next apply the exterior condition in \eqref{model} to derive that $u(x,t)$ has a smaller upper bound in $\left(B_{r_2}(\hat{x}^k)\cap\mathbb{R}^n_+\right)\times(t_k-r_1^{\frac{2s}{\alpha}},t_k+r_1^{\frac{2s}{\alpha}}]$.
It follows that the antisymmetric function $w_{\lambda_0}(x,t)$ is positively bounded away from zero in
$\left(B_{r_2}(\hat{x}^k)\cap\mathbb{R}^n_+\right)\times(t_k-r_1^{\frac{2s}{\alpha}},t_k+r_1^{\frac{2s}{\alpha}}]$.
We further use the averaging effects demonstrated in Theorem \ref{AEA} to deduce that
$w_{\lambda_0}(x,t)\geq \varepsilon_2>0$  in $B_{\frac{r_1}{2}}(x^k)\times\left(t_k-(\frac{r_1}{2})^{\frac{2s}{\alpha}},t_k+(\frac{r_1}{2})^{\frac{2s}{\alpha}}\right]$.
Finally, a combination of the continuity of $w_\lambda(x,t)$ with respect to $\lambda$ and $\lambda_k\rightarrow \lambda_0$ as $k\rightarrow\infty$ yields that
$ w_{\lambda_k}(x^k,t_k)\geq\frac{ \varepsilon_2}{2}>0$ for sufficiently large $k$.

\begin{center}
\begin{tikzpicture}[node distance = 0.3cm]
\draw [->, semithick] (-3,-1) -- (-3,3) node[above] {$x'$};
\path (-2.75,-0.75) node [ font=\fontsize{10}{10}\selectfont] {$T_0$};
\draw  [->,thick](-4,0)--(5,0) node [anchor=north west] {$x_1$};
\draw [semithick] (0,-1) -- (0,3);
\path (0.35,-0.75) node  [ font=\fontsize{10}{10}\selectfont] {$T_{\lambda_0}$};
\draw [semithick] [blue][dashed](0.65,-1) -- (0.65,3);
\path (0.95,-0.75) node [ font=\fontsize{10}{10}\selectfont] {$T_{\lambda_k}$};
\draw [semithick] (3,-1) -- (3,3.5);
\path (3.5,-0.75) node [ font=\fontsize{10}{10}\selectfont] {$T_{2\lambda_0}$};
\draw (-1.25,2) ellipse[x radius=1cm, y radius=1cm];
\draw (-1.25,2) [orange] ellipse[x radius=0.5cm, y radius=0.5cm];
\path (-1.25,2)[very thick,fill=red]  circle(1.3pt) node [ font=\fontsize{9}{9}\selectfont] at (-1.1,1.95) {$x^k$};
\draw (-1.25,2)-- (-2.25,2);
\path (-1.9,2.15) node [ font=\fontsize{9}{9}\selectfont] {$r_1$};
\fill[blue!50] (3,2.45) arc(90:270:0.45);
\path (3,2)[very thick,fill=red]  circle(1.3pt) node [ font=\fontsize{9}{9}\selectfont] at (3.2,1.8) {$\bar{x}^k$};
\draw (3,2) ellipse[x radius=1cm, y radius=1cm];
\draw (3,2)[green] ellipse[x radius=0.5cm, y radius=0.5cm];
\draw (2,2)-- (3,2);
\path (2.3,2.15) node [ font=\fontsize{9}{9}\selectfont] {$r_1$};
\fill[blue!50] (-3,1.55) arc(-90:90:0.45);
\path (-3,2)[very thick,fill=red]  circle(1.3pt) node [ font=\fontsize{9}{9}\selectfont] at (-2.8,1.8) {$\hat{x}^k$};
\draw (-3,2) ellipse[x radius=0.45cm, y radius=0.45cm];
\draw (-3,2)-- (-2.55,2);
\path (-2.8,2.15) node [ font=\fontsize{9}{9}\selectfont] {$r_2$};
\node [below=1.5cm, align=flush center,text width=10cm] at (0,-0.1)
        {Figure 4. The positional relationship between the balls. };
\end{tikzpicture}
\end{center}

We now carry out the details of the proof.

Let $\bar{x}^k=(2\lambda_0,(x^k)')$, since $$\operatorname{dist}(T_{\lambda_0},T_{2\lambda_0})=\lambda_0>2r_1,$$
then we have $\overline{B_{r_1}(x^k)}\cap B_{2r_1}(\bar{x}^k)=\varnothing$.
Here the main purpose is to claim that there exists a positive constant $\varepsilon_1=\varepsilon_1(\alpha,n,s,\varepsilon_0,\lambda_0,r_1)$ such that
\begin{equation}\label{MR6}
  u(x,t)\geq \varepsilon_1>0 \,\,\mbox{in}\,\, B_{r_1}(\bar{x}^k)\times(t_k-r_1^{\frac{2s}{\alpha}},t_k+r_1^{\frac{2s}{\alpha}}].
\end{equation}
If not, that is to say
\begin{equation}\label{MR7}
  u(x,t)< \varepsilon\,\,\mbox{ in} \,\, B_{r_1}(\bar{x}^k)\times(t_k-r_1^{\frac{2s}{\alpha}},t_k+r_1^{\frac{2s}{\alpha}}]\,\,\mbox{ for any} \,\,\varepsilon>0,
\end{equation}
then applying the condition that $f$ is a $C^1$ function, we have
$$|f(u(x, t))-f(0)|\leq C|(u(x, t))-0|<C\varepsilon\,\,\mbox{in}\,\, B_{r_1}(\bar{x}^k)\times(t_k-r_1^{\frac{2s}{\alpha}},t_k+r_1^{\frac{2s}{\alpha}}].$$ Furthermore, it follows from the assumption $f(0)\geq 0$ that
\begin{equation*}
  f(u(x, t))> -C\varepsilon  \,\, \mbox{in}\,\, B_{r_1}(\bar{x}^k)\times(t_k-r_1^{\frac{2s}{\alpha}},t_k+r_1^{\frac{2s}{\alpha}}]\,\,\mbox{for any}\,\, \varepsilon>0.
\end{equation*}
Then by virtue of  \eqref{model}, \eqref{MR5-1} and the averaging effects for the nonlocal operators established in
Theorem \ref{AE}\,, and combining with the continuity of $u(x,t)$, we derive
$$ u(x,t)\geq \varepsilon_1>0\,\,\mbox{in}\,\, B_{\frac{r_1}{2}}(\bar{x}^k)\times\left(t_k-(\frac{r_1}{2})^{\frac{2s}{\alpha}},t_k+(\frac{r_1}{2})^{\frac{2s}{\alpha}}\right],$$
where the positive constant $\varepsilon_1=\varepsilon_1(\alpha,n,s,\varepsilon_0,\lambda_0,r_1)$. It apparently contradicts \eqref{MR7}, which verifies \eqref{MR6}.

Next, let $\hat{x}^k=(0,(x^k)')$, using the exterior condition
$u(x,t)\equiv0$ in $(\mathbb{R}^n\setminus\mathbb{R}^n_+)\times\mathbb{R}$ and the continuity of $u(x,t)$ again, we deduce that there exists some positive small radius $r_2<\frac{r_1}{2}$ independent of $k$, such that
\begin{equation}\label{MR8}
  u(x,t)\leq \frac{\varepsilon_1}{2} \,\,\mbox{in}\,\, \left(B_{r_2}(\hat{x}^k)\cap\mathbb{R}^n_+\right)\times(t_k-r_1^{\frac{2s}{\alpha}},t_k+r_1^{\frac{2s}{\alpha}}],
\end{equation}
where $\varepsilon_1$ is given in \eqref{MR6}. Note that for any point $x\in B_{r_2}(\hat{x}^k)\cap\mathbb{R}^n_+$, its reflection point $x^{\lambda_0}$ with respect to the plane $T_{\lambda_0}$ belongs to $B_{r_2}(\bar{x}^k)\cap\Sigma_{2\lambda_0}\subset B_{\frac{r_1}{2}}(\bar{x}^k)$ by $r_2<\frac{r_1}{2}$.
Then a combination of \eqref{MR6} and \eqref{MR8} yields that
\begin{equation}\label{MR9-1}
  w_{\lambda_0}(x,t)=u(x^{\lambda_0},t)-u(x,t)\geq\varepsilon_1-\frac{\varepsilon_1}{2} =\frac{\varepsilon_1}{2}>0  \,\,\mbox{in}\,\,  \left(B_{r_2}(\hat{x}^k)\cap\mathbb{R}^n_+\right)\times(t_k-r_1^{\frac{2s}{\alpha}},t_k+r_1^{\frac{2s}{\alpha}}]\,.
\end{equation}

The ultimate aim is to demonstrate that there exists a positive constant $\varepsilon_2=\varepsilon_2(\alpha,n,s,\varepsilon_0,\lambda_0,r_1)$, such that
\begin{equation}\label{MR9}
  w_{\lambda_0}(x,t)\geq \varepsilon_2>0  \,\,\mbox{for}\,\, (x,t)\in B_{\frac{r_1}{2}}(x^k)\times\left(t_k-(\frac{r_1}{2})^{\frac{2s}{\alpha}},t_k+(\frac{r_1}{2})^{\frac{2s}{\alpha}}\right]\,.
\end{equation}
Otherwise, there holds that
\begin{equation}\label{MR10}
  w_{\lambda_0}(x,t)<\varepsilon  \,\,\mbox{in}\,\,  B_{\frac{r_1}{2}}(x^k)\times\left(t_k-(\frac{r_1}{2})^{\frac{2s}{\alpha}},t_k+(\frac{r_1}{2})^{\frac{2s}{\alpha}}\right] \,\,\mbox{for any} \,\,\varepsilon>0.
\end{equation}
Considering \eqref{MR2} and the definition of $\lambda_0$, we obtain
\begin{equation*}
\left\{
\begin{array}{ll}
    \partial_t^\alpha w_{\lambda_0}(x,t)+(-\Delta)^sw_{\lambda_0}(x,t)=C_{\lambda_0}(x,t)w_{\lambda_0}(x,t),   \,\,\mbox{in}\,\,  B_{\frac{r_1}{2}}(x^k)\times\left(t_k-(\frac{r_1}{2})^{\frac{2s}{\alpha}},t_k+(\frac{r_1}{2})^{\frac{2s}{\alpha}}\right]   , \\
 w_{\lambda_0}(x,t)\geq 0 ,  \,\,\qquad\qquad\mbox{in}\,\, \left(\Sigma_{\lambda_0}\setminus B_{\frac{r_1}{2}}(x^k)\right)\times\left(t_k-(\frac{r_1}{2})^{\frac{2s}{\alpha}},t_k+(\frac{r_1}{2})^{\frac{2s}{\alpha}}\right],\\
 w_{\lambda_0}(x,t)\geq 0 ,  \,\,\qquad\qquad\mbox{in}\,\,  B_{\frac{r_1}{2}}(x^k)\times\left(-\infty,t_k-(\frac{r_2}{2})^{\frac{2s}{\alpha}}\right].
\end{array}
\right.
\end{equation*}
Meanwhile, if follows from \eqref{MR10} and $f\in C^1$ that
\begin{equation*}
 C_{\lambda_0}(x,t)w_{\lambda_0}(x,t)=f(u_{\lambda_0}(x, t))-f(u(x, t))> -C\varepsilon  \,\, \mbox{in}\,\, B_{\frac{r_1}{2}}(x^k)\times\left(t_k-(\frac{r_1}{2})^{\frac{2s}{\alpha}},t_k+(\frac{r_1}{2})^{\frac{2s}{\alpha}}\right],
\end{equation*}
for any $\varepsilon>0$. Thereby using \eqref{MR9-1} and the averaging effects established in Theorem \ref{AEA} for the antisymmetric function $w_{\lambda_0}(x,t)$, and combining with
the continuity of $u(x,t)$, we conclude that
\begin{equation*}
  w_{\lambda_0}(x,t)\geq \varepsilon_2>0  \,\,\mbox{in}\,\, B_{\frac{r_1}{4}}(x^k)\times\left(t_k-(\frac{r_1}{4})^{\frac{2s}{\alpha}},t_k+(\frac{r_1}{4})^{\frac{2s}{\alpha}}\right]
\end{equation*}
for some positive constant $\varepsilon_2=\varepsilon_2(\alpha,n,s,\varepsilon_0,\lambda_0,r_1)$, which contradicts the assumption \eqref{MR10}. Hence, we verify that \eqref{MR9} is valid.
 Moreover, applying \eqref{MR9}, and combining the continuity of $w_\lambda(x,t)$ in $\lambda$ with $\lambda_k\rightarrow \lambda_0$ as $k\rightarrow\infty$, we finally derive
\begin{equation*}
  w_{\lambda_k}(x,t)\geq\frac{ \varepsilon_2}{2}>0  \,\, \mbox{in} \,\,B_{\frac{r_1}{2}}(x^k)\times\left(t_k-(\frac{r_1}{2})^{\frac{2s}{\alpha}},t_k+(\frac{r_1}{2})^{\frac{2s}{\alpha}}\right] \,\,\mbox{for sufficiently large}\,\, k,
\end{equation*}
which means that $ w_{\lambda_k}(x^k,t_k)\geq\frac{ \varepsilon_2}{2}>0$ for sufficiently large $k$. Hence, it contradicts the assumption that the sequence $\{(x^k,t_k)\}\subset\Sigma^-_{\lambda_k}\times\mathbb{R}$, therefore we must have $\lambda_0=+\infty$.

\noindent \textup{\textbf{Step 3.}} In this final step, we prove that $u(x,t)$ is strictly increasing with respect to $x_{1}$ in $\mathbb{R}^n_+$ for any $t\in\mathbb{R}$.
By virtue of Step 1 and Step 2, we have deduced that
\begin{equation*}
  w_{\lambda}(x,t)\geq0 \,\,\mbox{in}\,\,\Sigma_\lambda\times\mathbb{R}\,\, \mbox{for any}\,\, \lambda>0.
\end{equation*}
In fact, it suffices to show that
\begin{equation}\label{MR11}
  w_{\lambda}(x,t)>0 \,\,\mbox{in}\,\,\Omega_\lambda\times\mathbb{R}\,\, \mbox{for any}\,\, \lambda>0,
\end{equation}
then we can conclude that $u(x,t)$ is strictly increasing with respect to $x_{1}$ in $\mathbb{R}^n_+$ for any $t\in\mathbb{R}$. Suppose \eqref{MR11} is violated, then there exist a fixed $\lambda_0>0$ and a point $(x^0,t_0)\in \Omega_{\lambda_0}\times\mathbb{R}$ such that
\begin{equation*}
  w_{\lambda_0}(x^0,t_0)=\min_{\Sigma_{\lambda_0}\times\mathbb{R}} w_{\lambda_0}(x,t)=0.
\end{equation*}
Since $w_{\lambda_0}(x,t_0)$ is not identically zero in $\Sigma_{\lambda_0}$ due to the exterior condition $u(x,t)\equiv 0$ in $(\mathbb{R}^n  \backslash \mathbb{R}^n_+) \times\mathbb{R}$ and the interior positivity of the solution $u(x,t)$ in $ \mathbb{R}^n_+ \times\mathbb{R}$, then through a straightforward calculation, we obtain
\begin{eqnarray*}
% \nonumber to remove numbering (before each equation)
    &&\partial_t^\alpha w_{\lambda_0}(x^0,t_0)+(-\Delta)^sw_{\lambda_0}(x^0,t_0)\\
    &=&C_{\alpha}\int_{-\infty}^{t_0} \frac{w_{\lambda_0}(x^0,t_0)-w_{\lambda_0}(x^0,\tau)}{(t_0-\tau)^{1+\alpha}}\operatorname{d}\!\tau+C_{n,s} P.V. \int_{\mathbb{R}^n} \frac{w_{\lambda_0}(x^0,t_0)-w_{\lambda_0}(y,t_0)}{|x^0-y|^{n+2s}}\operatorname{d}\!y\\
    &\leq&C_{n,s} P.V. \int_{\Sigma_{\lambda_0}} w_{\lambda_0}(y,t_0)\left(\frac{1}{|x^0-y^{\lambda_0}|^{n+2s}}-\frac{1}{|x^0-y|^{n+2s}}\right)\operatorname{d}\!y<0,
\end{eqnarray*}
which contradicts the equation
\begin{equation*}
% \nonumber to remove numbering (before each equation)
    \partial_t^\alpha w_{\lambda_0}(x^0,t_0)+(-\Delta)^sw_{\lambda_0}(x^0,t_0)=f(u_{\lambda_0}(x^0,t_0))-f(u(x^0,t_0))=0.
\end{equation*}
This verifies \eqref{MR11}.

Finally, based on \eqref{MR11}, for each fixed $t \in \mathbb{R}$,  for any points $\bar{x}=(\bar{x}_1,x')$ and $\hat{x}=(\hat{x}_1,x')$ in $\mathbb{R}^{n}_+$ with $\bar{x}_1<\hat{x}_1$, if choosing $\lambda=\frac{\bar{x}_1+\hat{x}_1}{2}$ as illustrated in Figure 5 below, then we must have
\begin{equation*}
  0< w_\lambda(\bar{x},t)=u(\bar{x}^{\lambda},t)-u(\bar{x},t)=u(\hat{x},t)-u(\bar{x},t).
\end{equation*}
It implies that $u(x,t)$ is strictly increasing with respect to $x_{1}$ in $\mathbb{R}^n_+$ for any $t\in\mathbb{R}$.
\begin{center}
\begin{tikzpicture}[scale=0.8]
\draw [very thick]  [black] [->,very thick](-4,0)--(4,0) node [anchor=north west] {$x_1$};
\draw [very thick]  [black!80][->,very thick] (-2.5,-1)--(-2.5,3) node [black][ above] {$x'$};
\path node at (-2.7,-0.3) {$0$};
\draw [very thick]  [black!80] (1.5,-1)--(1.5,3) node [black][ above] {$T_{\lambda}$};
\path (0.5,1.5)[very thick,fill=red]  circle(1.8pt) node at (0.6,1.1) {$\bar{x}$};
\path (2.5,1.5)[very thick,fill=red]  circle(1.8pt) node at (2.6,1.1) {$\hat{x}$};
\draw [very thick] [dashed] [blue] (0.5,1.5)--(2.5,1.5);
\node [below=0.5cm, align=flush center,text width=12cm] at  (0,-0.5)
        {Figure 5. The positions of the points $\bar{x}$, $\hat{x}$ and the plane $T_\lambda$. };
\end{tikzpicture}
\end{center}

This completes the proof of Theorem \ref{MainR}.
\end{proof}

\section{Appendix}\label{5}
In this section, we provide some facts on the fractional time derivative that
are used repeatedly in establishing our main results.

\begin{lemma}\label{mlem1}
Let $\eta(t)\in C_0^\infty\left((-2,2)\right)$ be a smooth cut-off function, satisfying
$\eta(t)\equiv1$ in $[-1,1]$ and $0\leq\eta(t)\leq 1$. Then there exists a positive constant $C_0$ that depends only on $\alpha$ such that
$$|\partial_t^\alpha\eta(t)|\leq C_0 \,\, \mbox{for}\,\, t\in(-2,2).$$
\end{lemma}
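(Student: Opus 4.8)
The plan is to estimate the Marchaud integral \eqref{fratime} for $u=\eta$ directly, splitting it according to the distance from the singular point $\tau=t$. First I would fix $t\in(-2,2)$ and write
\[
\partial_t^\alpha\eta(t)=C_\alpha\left(\int_{t-1}^{t}\frac{\eta(t)-\eta(\tau)}{(t-\tau)^{1+\alpha}}\operatorname{d}\!\tau+\int_{-\infty}^{t-1}\frac{\eta(t)-\eta(\tau)}{(t-\tau)^{1+\alpha}}\operatorname{d}\!\tau\right)=:C_\alpha(I_1+I_2),
\]
treating $I_1$ as the ``near'' (singular) piece and $I_2$ as the ``far'' (tail) piece. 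The same bounds obtained below will also show that the integral converges absolutely, so that $\partial_t^\alpha\eta(t)$ is well defined for every $t\in(-2,2)$.

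For $I_1$ I would use that $\eta\in C_0^\infty(\mathbb{R})$ is globally Lipschitz with constant $L:=\|\eta'\|_{L^\infty(\mathbb{R})}$ independent of $t$, so $|\eta(t)-\eta(\tau)|\le L|t-\tau|$ on the near piece and hence
\[
|I_1|\le L\int_{t-1}^{t}\frac{\operatorname{d}\!\tau}{(t-\tau)^{\alpha}}=\frac{L}{1-\alpha},
\]
which is where the hypothesis $\alpha<1$ enters. For $I_2$ one has $t-\tau\ge 1$ throughout, so using only $0\le\eta\le 1$ gives
\[
|I_2|\le 2\int_{-\infty}^{t-1}\frac{\operatorname{d}\!\tau}{(t-\tau)^{1+\alpha}}=\frac{2}{\alpha}.
\]
Adding these, $|\partial_t^\alpha\eta(t)|\le C_\alpha\big(\tfrac{L}{1-\alpha}+\tfrac{2}{\alpha}\big)=:C_0$ uniformly in $t\in(-2,2)$. (Strictly $C_0$ also involves $\|\eta'\|_{L^\infty}$, but for the fixed cut-off $\eta$ used throughout the paper this is a harmless universal constant, which is why only the dependence on $\alpha$ is recorded.)

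There is essentially no deep obstacle here; the only points needing a moment of care are that the integrand behaves like $(t-\tau)^{-\alpha}$ near $\tau=t$, which is integrable precisely because $\alpha\in(0,1)$, and that the splitting abscissa $t-1$ may lie outside the support $(-2,2)$ of $\eta$ when $t$ is close to $-2$. The latter is not an issue because $\eta$ and its global Lipschitz bound are defined on all of $\mathbb{R}$ and $\eta$ simply vanishes there; alternatively one could split at $\min\{t-1,-2\}$ and use $\eta(\tau)=0$ for $\tau\le -2$, but the argument above already suffices.
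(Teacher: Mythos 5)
Your proof is correct and follows essentially the same strategy as the paper's: split the Marchaud integral into a near-singularity piece controlled by the Lipschitz bound $|\eta(t)-\eta(\tau)|\le \|\eta'\|_\infty|t-\tau|$ (integrable since $\alpha<1$) and a far tail controlled by $0\le\eta\le 1$ (integrable since $\alpha>0$); the only cosmetic difference is that you split at the moving point $\tau=t-1$ while the paper splits at the fixed point $\tau=-3$, where it can additionally use $\eta(\tau)=0$.
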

\begin{proof}
By using the definitions of the fractional time derivative $\partial_t^\alpha$ and the smooth cut-off function $\eta(t)$, we directly compute
\begin{eqnarray*}
% \nonumber to remove numbering (before each equation)
 |\partial_t^\alpha \eta(t)|&=&C_{\alpha}\left|\int_{-\infty}^t \frac{\eta(t)-\eta(\tau)}{(t-\tau)^{1+\alpha}}\operatorname{d}\!\tau\right|, \\
   &\leq&  C_{\alpha}\left|\int_{-\infty}^{-3} \frac{\eta(t)}{(t-\tau)^{1+\alpha}}\operatorname{d}\!\tau\right|+C_{\alpha}\left|\int_{-3}^{t} \frac{\eta(t)-\eta(\tau)}{(t-\tau)^{1+\alpha}}\operatorname{d}\!\tau\right|\\
   &\leq&\frac{C_\alpha}{\alpha}+CC_{\alpha}\left|\int_{-3}^{t} \frac{(t-\tau)}{(t-\tau)^{1+\alpha}}\operatorname{d}\!\tau\right|\\
   &\leq&\frac{C_\alpha}{\alpha}+\frac{CC_{\alpha}5^{1-\alpha}}{1-\alpha}=:C_0.
\end{eqnarray*}
Hence, we complete the proof of Lemma \ref{mlem1}\,.
\end{proof}
As a byproduct, we derive the following result by rescaling and translation.
\begin{corollary}\label{coro1}
For any $t_0\in \mathbb{R}$ and $r>0$, let $$\eta_0(t):=\eta\left(\frac{t-t_0}{r^{\frac{2s}{\alpha}}}\right)\in C_0^\infty\left((-2r^{\frac{2s}{\alpha}}+t_0,2r^{\frac{2s}{\alpha}}+t_0)\right),$$
then
$$|\partial_t^\alpha\eta_0(t)|\leq \frac{C_0}{r^{2s}} \,\, \mbox{for}\,\, t\in(-2r^{\frac{2s}{\alpha}}+t_0,2r^{\frac{2s}{\alpha}}+t_0),$$
where the smooth cut-off function $\eta(\cdot)$ and the positive constant $C_0$ are defined in Lemma \ref{mlem1}\,.
\end{corollary}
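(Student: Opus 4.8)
The plan is to obtain Corollary~\ref{coro1} from Lemma~\ref{mlem1} by a single change of variables, using that the Marchaud derivative $\partial_t^\alpha$ is invariant under translations in $t$ and scales with weight $r^{-2s}$ under the parabolic dilation $t\mapsto t_0+r^{\frac{2s}{\alpha}}t$. The exponent $\frac{2s}{\alpha}$ on the time variable is precisely what makes this scaling compatible with the dual operator $\partial_t^\alpha+(-\Delta)^s$ (time scale $r^{\frac{2s}{\alpha}}$ against space scale $r$), and it is this compatibility that produces the factor $r^{-2s}$ in the statement. Before carrying this out, I note that $\eta_0\in C_0^\infty\big((-2r^{\frac{2s}{\alpha}}+t_0,\,2r^{\frac{2s}{\alpha}}+t_0)\big)\subset C^1(\mathbb{R})\cap{\mathcal L}^{-}_{\alpha}(\mathbb{R})$, so that $\partial_t^\alpha\eta_0(t)$ is a well-defined singular integral for every $t$.

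Next I would write out, for $t\in(-2r^{\frac{2s}{\alpha}}+t_0,\,2r^{\frac{2s}{\alpha}}+t_0)$,
$$
\partial_t^\alpha\eta_0(t)=C_\alpha\int_{-\infty}^t\frac{\eta\left(\frac{t-t_0}{r^{\frac{2s}{\alpha}}}\right)-\eta\left(\frac{\tau-t_0}{r^{\frac{2s}{\alpha}}}\right)}{(t-\tau)^{1+\alpha}}\operatorname{d}\!\tau,
$$
and perform the substitution $\tau=t_0+r^{\frac{2s}{\alpha}}\sigma$, setting $\zeta:=(t-t_0)\,r^{-\frac{2s}{\alpha}}$. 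Since $r>0$, this sends the interval of integration $(-\infty,t)$ to $(-\infty,\zeta)$, contributes a Jacobian factor $r^{\frac{2s}{\alpha}}$ via $\operatorname{d}\!\tau=r^{\frac{2s}{\alpha}}\operatorname{d}\!\sigma$, and turns the kernel into $(t-\tau)^{-(1+\alpha)}=r^{-\frac{2s}{\alpha}(1+\alpha)}(\zeta-\sigma)^{-(1+\alpha)}$. Collecting the powers of $r$ yields the net factor $r^{\frac{2s}{\alpha}-\frac{2s}{\alpha}(1+\alpha)}=r^{-2s}$, so that
$$
\partial_t^\alpha\eta_0(t)=r^{-2s}\,C_\alpha\int_{-\infty}^{\zeta}\frac{\eta(\zeta)-\eta(\sigma)}{(\zeta-\sigma)^{1+\alpha}}\operatorname{d}\!\sigma=r^{-2s}\,\big(\partial_t^\alpha\eta\big)(\zeta).
$$

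Finally, I would observe that $t\in(-2r^{\frac{2s}{\alpha}}+t_0,\,2r^{\frac{2s}{\alpha}}+t_0)$ if and only if $\zeta=(t-t_0)\,r^{-\frac{2s}{\alpha}}\in(-2,2)$, so Lemma~\ref{mlem1} gives $|(\partial_t^\alpha\eta)(\zeta)|\le C_0$ for all such $t$, and hence $|\partial_t^\alpha\eta_0(t)|\le C_0\,r^{-2s}=C_0/r^{2s}$, which is the claim. I do not expect any genuine obstacle here: the only points demanding care are keeping track of the exponents of $r$ correctly in the change of variables and noting that the translation by $t_0$ drops out because $\partial_t^\alpha$ commutes with time translations.
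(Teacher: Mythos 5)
Your proof is correct and is exactly the "rescaling and translation" argument the paper intends (the paper states the corollary as a byproduct of Lemma \ref{mlem1} without writing out the change of variables). Your bookkeeping of the powers of $r$ — Jacobian $r^{\frac{2s}{\alpha}}$ against kernel factor $r^{-\frac{2s}{\alpha}(1+\alpha)}$, giving the net $r^{-2s}$ — is accurate, and the reduction of the interval to $\zeta\in(-2,2)$ correctly places you in the setting of Lemma \ref{mlem1}.
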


\section*{Acknowledgments}
The work of the first author is partially supported by MPS Simons foundation 847690.

This work of the second author is partially supported by the National Natural Science Foundation of China (NSFC Grant No.12101452).

\end{document}